\documentclass[oneside,reqno,12pt]{amsart}

\evensidemargin0cm \oddsidemargin0cm \textwidth16cm

\usepackage{amsmath,amssymb,amsfonts}
\usepackage{kpfonts}
\usepackage{xcolor}
\usepackage{mathtools}
\usepackage{bbm}
\usepackage{hyperref}
\usepackage[foot]{amsaddr}

\newtheorem{theorem}{Theorem}
\newtheorem{lemma}[theorem]{Lemma}
\newtheorem{corollary}[theorem]{Corollary}
\newtheorem{rem}[theorem]{Remark}
\newtheorem{prop}[theorem]{Proposition}

\newcommand{\1}{\mathbbm{1}}

\renewcommand{\mod}{{\rm mod}\,}
\renewcommand{\dim}{{\rm dim}\,}

\newcommand{\R}{\mathbb{R}}

\newcommand{\N}{\mathbb{N}}
\newcommand{\Q}{\mathbb{Q}}
\newcommand{\E}{\mathbb{E}}
\newcommand{\Z}{\mathbb{Z}}

\newcommand{\GCD}{{\rm GCD}\,}
\newcommand{\LCM}{{\rm LCM}\,}
\newcommand{\NLCM}{{\rm NLCM}\,}
\renewcommand{\dim}{{\rm dim}\,}
\newcommand{\ukp}{u_{k,p}}

\newcommand{\todistr}{\overset{{\rm d}}{\longrightarrow}}
\newcommand{\toprobab}{\overset{\mathbb{P}}{\longrightarrow}}

\numberwithin{equation}{section}
\numberwithin{theorem}{section}

\begin{document}

\title[Polynomials of random integers]{Divisibility properties of polynomial expressions of random integers}

\author{Zakhar Kabluchko$^1$}
\address{$^1$Institut f\"{u}r Mathematische Stochastik, Westf\"{a}lische Wilhelms-Universit\"{a}t M\"{u}nster, M\"{u}nster, Germany}

\author{Alexander Marynych$^2$}
\address{$^2$Faculty of Computer Science and Cybernetics, Taras Shev\-chen\-ko National University of Kyiv, Kyiv 01601, Ukraine}

\begin{abstract}
We study divisibility properties of a set $\{f_1(\mathbf{U}_n^{(s)}),\ldots,f_m(\mathbf{U}_n^{(s)})\}$, where $f_1,\ldots,f_m$ are polynomials in $s$ variables over $\mathbb{Z}$ and $\mathbf{U}_n^{(s)}$ is a point picked uniformly at random from the set $\{1,\ldots,n\}^s$, $s\in\mathbb{N}$. We show that the $\GCD$ and the suitably normalized $\LCM$ of this set converge in distribution to a.s.\ finite random variables under mild assumptions on $f_1,\ldots, f_m$. Our approach is based on the notion of integer adeles and a known fact that the uniform distribution on $\{1,\ldots, n\}$ converges to the Haar measure on the ring of integer adeles combined with the Lang-Weil bounds.
\end{abstract}

\keywords{Adele; divisibility; integer-valued polynomials; profinite integers; valuation}

\subjclass[2020]{Primary: 11K65, 11D88; Secondary: 11C08, 13F20}

\maketitle

\section{Introduction}

One of the most classic results in probabilistic number theory, which can be traced back at least to Dirichlet~\cite{Dirichlet}, states that probability that two numbers, picked uniformly at random from the set $\{1,2,\ldots,n\}$, are coprime converges to
$$
\prod_{p\in\mathcal{P}}\left(1-\frac{1}{p^2}\right)=\frac{1}{\zeta(2)}=\frac{6}{\pi^2},
$$
as $n\to\infty$. Here $\mathcal{P}$ denotes the set of prime numbers and $\zeta$ is the Riemann zeta-function. We refer to~\cite{Abramovich+Nikitin} for a nice historical account of this result. More generally, it is known~\cite{Cesaro,Christopher,Cohen} that the greatest common divisor, to be denoted in what follows by $\GCD$, of $s\geq 2$ numbers $(U_{n,1},\ldots,U_{n,s})=:{\bf U}_n^{(s)}$ picked uniformly at random from $\{1,2,\ldots,n\}^s$ converges in distribution, as $n\to\infty$, to an $\N$-valued random variable with the probability mass function
\begin{equation}\label{eq:zeta_distribution}
j\longmapsto\frac{1}{\zeta(s)j^s},\quad j\in\mathbb{N}.
\end{equation}
A recent paper~\cite{Fernandez+Fernandez} provides a comprehensive overview of results of this kind related to divisibility of random integers.

The motivation for the present paper comes from our attempt to understand the aforementioned result via a continuous mapping approach ubiquitous in probability theory and also to generalize it. In its simplest form, the continuous mapping theorem, see Theorem 2.7 in~\cite{Billingsley}, states that if a sequence of random elements $(X_n)_{n\in\mathbb{N}}$ with values in a metric space $M_1$ converges in distribution to a random element $X_{\infty}$ and $f$ is a continuous mapping from $M_1$ to another metric space $M_2$, then a sequence of $M_2$-valued random elements $(f(X_n))_{n\in\mathbb{N}}$ converges in distribution to $f(X_{\infty})$. Thus, to derive a convergence of $\GCD({\bf U}_n^{(s)})$ via the continuous mapping approach the crucial step is to pick an appropriate topology, with respect to which the convergence in distribution of ${\bf U}_n^{(s)}$ is regarded. In this respect, the notion of integer adeles and a closely related concept of profinite integers turned out to be very useful. An incomplete list of references on various applications of profinite integers and integer adeles in probabilistic number theory includes~\cite{Avdeeva+Cellarosi+Sinai,Duy1,Duy+Takanobu,Kubota+Sugita, Novoselov1,Novoselov2,Sugita+Takanobu}.

Roughly speaking, a ring of integer adeles $\widehat{\mathbb{Z}}$ is a compactification of $\mathbb{Z}$ with respect to which two integers are close, if and only if they possess the same small prime divisors counting multiplicities. The rigorous definition will be recalled below in Section~\ref{sec:results_adeles}. A nice overview of this and other compactifications of $\Z$ in probabilistic number theory can be found in~\cite{Indlekofer1,Indlekofer2}. In particular, a proof of the aforementioned result on the density of coprime pairs using this notion was given in~\cite{Kubota+Sugita}; see also~\cite{Duy1,Duy+Takanobu,Sugita+Takanobu} for related results. A rather simple observation which lies in the core of those proofs is the convergence of $U_n:=U_{n,1}$ (and, therefore, of ${\bf U}_n^{(s)}$), identified with an element of $\widehat{\mathbb{Z}}$ via the canonical embedding, to a random element distributed according to the Haar measure on $\widehat{\mathbb{Z}}$. This result can be found, for example, as Lemma 6 in~\cite{Kubota+Sugita}. We shall recall this fact in a slightly extended form as Proposition~\ref{prop:main_adeles} and give a short proof based on the Chinese Remainder Theorem in Section~\ref{sec:results_adeles}.

In this paper we are concerned with generalizations of the aforementioned and similar results to the following more general setting. Let $f_1,\ldots,f_m$ be polynomials in $s$ variables with integer coefficients. What can be said about the greatest common divisor of $\{f_1(\mathbf{U}_n^{(s)}),\ldots,f_m(\mathbf{U}_n^{(s)})\}$? Or what is the probability that $f_1(\mathbf{U}_n^{(s)})$ divides $f_2(\mathbf{U}_n^{(s)})$? It turn out that the approach outlined above can be successfully applied in this setting. For example, our results can be used to conclude that the sequence of random variables
\begin{equation}\label{eq:lcm_intro}
n^{-9}\LCM(U_{n,1}^2+U_{n,2}^2,U_{n,1}^3+U_{n,2}^3,U_{n,1}^4+U_{n,2}^4),
\end{equation}
where $\LCM$ denotes the least common multiple, converges in distribution to a non-trivial limit, as $n\to\infty$. A direct check of this fact seems to be a challenging problem. To the best of our knowledge, these questions have not been addressed in the literature. A related result on relatively prime values of polynomials can be found in Theorem 3.1 in~\cite{Poonen}. Another tangent result is an {E}rd\H{o}s-{K}ac law for the number of prime divisors of a polynomial in several variables which has been established in~\cite{Xiong}.

The rest of the paper is organized as follows. In Section~\ref{sec:results_adeles} we recall the definition of integer adeles and reprove a result on convergence of all $p$-adic expansions of a uniformly sampled integer on $\{1,2,\ldots,n\}$ to a random element of $\widehat{\Z}$ distributed according the Haar measure. Section~\ref{sec:results_polynomials} is devoted to the the analysis of arithmetic properties of the set $\{f_1(\mathbf{U}_n^{(s)}),\ldots,f_m(\mathbf{U}_n^{(s)})\}$.
One of the central result in Section~\ref{sec:results_polynomials} is Theorem~\ref{theo:gcd} which, in particular, provides the limit distribution for the $\LCM$ in~\eqref{eq:lcm_intro}. A list of further results in Section~\ref{sec:results_polynomials} includes limit theorems for the $\GCD$ and the normalized $\LCM$ of the above set. The proofs of these results are given in Section~\ref{sec:proofs} with a one long technical proof being postponed to Section~\ref{sec:abscence}. Some short auxiluary results are collected in the Appendix.

\section{Ring of integer adeles and convergence to the Haar measure}\label{sec:results_adeles}

Let $\Q_p$ be the field of $p$-adic rational numbers, which is the completion of $\Q$ with respect to the $p$-adic norm
$$
\left\|\frac{a}{b}p^l\right\|_p:=p^{-l},\quad l\in\Z,\quad a,b\text{ are coprime to }p.
$$
Denote also by $\|\cdot\|_{\infty}$ the usual Euclidean norm on $\Q$ and by $\Q_{\infty}=\R$ the completion of $\Q$ with respect to $\|\cdot\|_{\infty}$.

For $p\in\mathcal{P}$ let $\Z_p$ be the ring of $p$-adic integers in $\Q_p$.
Any $p$-adic integer can be represented as  $a_0+a_1 p+a_2 p^2+\cdots$ with $a_i\in\{0,1,\ldots,p-1\}=:\Z/p\Z$. The ring $\Z_p$ is a compact subring of $\Q_p$. Therefore, the direct product
$$
\widehat{\mathbb Z}=\prod_{p\in\mathcal{P}}\mathbb{Z}_p,
$$
is also a compact topological ring by Tychonoff's theorem.  The elements of $\widehat{\mathbb Z}$ are called  {\it integer adeles}~\cite{Lang},  \textit{profinite integers}~\cite{Rota} or \textit{polyadic numbers}~\cite{Novoselov1,Novoselov2}.  The compact abelian group $\widehat{\mathbb Z}$ is the profinite completion of $\Z$ introduced by H.\ Pr\"ufer~\cite{Pruefer}; see also~\cite{Demangos+Longhi,Lovas+Mezoe,Novoselov1,Novoselov2}.

Since $\Z_p$ is a compact abelian group, for each $p\in\mathcal{P}$, there exists a unique invariant (Haar) probability measure $\mu_p$ on $\Z_p$. The explicit probabilistic construction of $\mu_p$ is as follows. Take $(\ukp)_{k\geq 0}$ independent uniformly distributed on $\{0,1,\ldots,p-1\}$ random variables and put
\begin{equation}\label{eq:v_p}
V_p:=\sum_{k=0}^{\infty}\ukp p^k\in\Z_p.
\end{equation}
Then $\mu_p$ is the distribution of $V_p$. Let $\widehat{\mu}=\prod_{p\in\mathcal{P}}\mu_p$ be the product measure on $\widehat{\mathbb Z}$. Then, $\widehat {\mu}$ is the unique Haar probability measure on the compact group $\widehat {\Z}$.

There is a unique canonical ring homomorphism
$$
\phi:\mathbb{Z}\longrightarrow\widehat{\mathbb Z}
$$
with $\phi(1) = 1$. It sends an integer $n$ to an infinite vector $\phi(n):=(\phi_p(n))_{p\in\mathcal{P}}\in \widehat{\mathbb Z}$ such that $\phi_p(n)$ is the $p$-adic expansion of $n$. Let $\pi^{(p)}:\widehat{\mathbb Z}\to \Z_p$ and $\pi^{(p)}_j:\widehat{\mathbb Z}\to\{0,1,\ldots,p-1\}$ be the canonical projections
\begin{equation}\label{eq:projections}
\pi^{(p)}((x_p)_{p\in \mathcal{P}})=x_p\quad\text{and}\quad \pi^{(p)}_j\left(\left(\sum_{k=0}^{\infty}a_{k,p} p^k\right)_{p\in\mathcal{P}}\right)=a_{j,p},\quad j\geq 0,\quad p\in\mathcal{P}.
\end{equation}

We shall use $\todistr$ to denote convergence in distribution of random elements. Throughout the paper convergence of infinite-dimensional vectors is understood with respect to the product topology, that is, as convergence of all finite-dimensional projections. A version of Proposition~\ref{prop:main_adeles} can be found as Lemma 6 in~\cite{Kubota+Sugita}.

\begin{prop}\label{prop:main_adeles}
Let $U_n$ be a random variable with the uniform distribution on $\{1,2,\ldots,n\}$. Then we have the convergence in distribution
$$
\left(\phi(U_n),\frac{U_n}{n}\right)~\todistr~(\mathcal{V},U_{\infty}),\quad n\to\infty,
$$
on the space $\widehat{\Z} \times [0,1]$. Here $\mathcal{V}:=(V_p)_{p\in\mathcal{P}}$, $V_p$  is given by~\eqref{eq:v_p}, $U_{\infty}$ has the uniform distribution on $[0,1]$, and $U_{\infty},V_2,V_3,V_5,\ldots$ are mutually independent. Note that $(\mathcal{V},U_{\infty})$ is distributed according to the product of the Haar measure  $\widehat{\mu}$ on $\widehat {\Z}$ and the Lebesgue measure on $[0,1]$.
\end{prop}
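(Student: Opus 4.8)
\textbf{Proof strategy for Proposition~\ref{prop:main_adeles}.}
The plan is to reduce the assertion to an elementary counting fact by exploiting two structural features of $\widehat{\Z}$. Since $\widehat{\Z}\times[0,1]$ is a compact metric space, the sequence $(\phi(U_n),U_n/n)_{n\ge1}$ is automatically tight, and a Borel probability measure on $\widehat{\Z}\times[0,1]$ is determined by its finite-dimensional marginals, i.e.\ by the joint laws of finitely many coordinates $\pi^{(p_i)}_j$, $0\le j\le N_i$, $1\le i\le r$, together with the $[0,1]$-coordinate. Hence every weak subsequential limit of $(\phi(U_n),U_n/n)$ is a probability measure, and to identify it with $\widehat{\mu}\otimes(\text{Lebesgue on }[0,1])$ it suffices to check that each such finite-dimensional marginal of $(\phi(U_n),U_n/n)$ converges in distribution to the corresponding marginal of $(\mathcal V,U_\infty)$.

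Next I would use the Chinese Remainder Theorem to translate these marginals into statements about residues. Fixing primes $p_1<\dots<p_r$ and depths $N_1,\dots,N_r\ge0$, put $M:=\prod_{i=1}^r p_i^{N_i+1}$, and note that every $M\in\N$ arises in this way. The digits $\pi^{(p_i)}_0(\phi(U_n)),\dots,\pi^{(p_i)}_{N_i}(\phi(U_n))$ are precisely the base-$p_i$ digits of $U_n$ of order $\le N_i$, hence a deterministic bijective function of $U_n\bmod p_i^{N_i+1}$; taken jointly over $i$, by CRT they form a bijective function of $U_n\bmod M$. On the limiting side, the same digits of $\mathcal V$ are $u_{0,p_i},\dots,u_{N_i,p_i}$, which are i.i.d.\ uniform on $\{0,\dots,p_i-1\}$ and independent across $i$; consequently the induced residue class of $\mathcal V$ modulo $M$, viewed in $\prod_i\Z/p_i^{N_i+1}\Z\cong\Z/M\Z$, is uniformly distributed, and it is independent of $U_\infty$ by assumption. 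Thus everything reduces to showing, for each $M\in\N$, that
\[
\Bigl(U_n\bmod M,\ \tfrac{U_n}{n}\Bigr)\ \todistr\ \bigl(W_M,\ U_\infty\bigr),
\]
where $W_M$ is uniform on $\Z/M\Z$ and independent of $U_\infty$.

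This remaining convergence is a direct computation. For $a\in\{0,1,\dots,M-1\}$ and $x\in[0,1]$,
\[
\mathbb P\bigl(U_n\equiv a\pmod{M},\ U_n/n\le x\bigr)=\frac1n\#\bigl\{1\le k\le\lfloor nx\rfloor:\ k\equiv a\pmod{M}\bigr\}=\frac{x}{M}+O\!\Bigl(\tfrac1n\Bigr),
\]
which tends to $\tfrac{x}{M}=\mathbb P(W_M=a)\,\mathbb P(U_\infty\le x)$. Since $\Z/M\Z$ is finite and the limiting distribution function $x\mapsto x/M$ is continuous, this pointwise convergence of the defective distribution functions is exactly convergence in distribution on $(\Z/M\Z)\times[0,1]$, and the proof is complete.

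I do not anticipate a genuine obstacle; the argument is essentially bookkeeping organised around the Chinese Remainder Theorem and the elementary count above. The one place that deserves care is the very first reduction — passing from convergence of all finite-dimensional projections to weak convergence on $\widehat{\Z}\times[0,1]$ — which relies on compactness of $\widehat{\Z}$ (Tychonoff) to secure tightness, so that no mass escapes and the subsequential limits are pinned down by their finite-dimensional marginals.
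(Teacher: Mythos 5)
Your proof is correct and follows essentially the same route as the paper: reduce to finite-dimensional marginals, collapse the finitely many digit constraints into a single residue condition modulo $M=\prod_i p_i^{N_i+1}$ via the Chinese Remainder Theorem, and finish with the elementary count $\frac{1}{n}\#\{k\le\lfloor nx\rfloor : k\equiv a\ (\mathrm{mod}\ M)\}\to x/M$. The only cosmetic difference is that you explicitly invoke compactness of $\widehat{\Z}\times[0,1]$ to justify passing from finite-dimensional convergence to weak convergence, whereas the paper builds this into its stated convention that convergence in the product topology means convergence of all finite-dimensional projections.
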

\begin{proof}
We need to show that
$$
\left(\left(\pi^{(p)}_j(\phi(U_n))\right)_{p\in\mathcal{P},j\in\mathbb{N}},\frac{U_n}{n}\right)~\todistr~\left(\left(\pi^{(p)}_j(\mathcal{V})\right)_{p\in\mathcal{P},j\in\mathbb{N}},U_{\infty}\right),\quad n\to\infty.
$$
Fix pairwise distinct $p_1,p_2,\ldots,p_m\in\mathcal{P}$, arbitrary $l_1,l_2,\ldots,l_m\in\N$, $t\in [0,1]$ and note that by independence
$$
\mathbb{P}\{\pi^{(p_i)}_k(\mathcal{V})=r_{k,p_i},\quad i=1,\ldots,m,\quad k=0,\dots,l_i-1\}=\prod_{i=1}^{m}\frac{1}{p_i^{l_i}},
$$
for any $r_{k,p_i}\in\Z/p_i\Z$, $i=1,\ldots,m$. Thus, it suffices to show that
\begin{equation}\label{eq:thm1_proof}
\lim_{n\to\infty}\mathbb{P}\{\pi^{(p_i)}_k(\phi(U_n))=r_{k,p_i},\quad i=1,\ldots,m,\quad k=0,\dots,l_i-1,\quad U_n\leq nt\}=t\prod_{i=1}^{m}\frac{1}{p_i^{l_i}},
\end{equation}
for any $r_{k,p_i}\in\Z/p_i\Z$, $i=1,\ldots,m$. Put $r_i:=\sum_{j=0}^{l_i-1}r_{j,p_i}p_i^j$ and note that
\begin{align*}
&\hspace{-1cm}\mathbb{P}\{\pi^{(p_i)}_k(\phi(U_n))=r_{k,p_i},\quad i=1,\ldots,m,\quad k=0,\dots,l_i-1,\quad U_n\leq nt\}\\
&=\mathbb{P}\{U_n\equiv r_{i}(\mod p^{l_i}),\quad i=1,\ldots,m,\quad U_n\leq nt\}\\
&=\frac{1}{n}\#\left\{k\in \{1,2,\ldots,\lfloor nt\rfloor\}: k\equiv r_i\left(\mod p_i^{l_i}\right),i=1,\ldots,m\right\}.
\end{align*}
Put $M:=\prod_{i=1}^{m}p_i^{l_i}$ and let $\1\{A\}$ denote the indicator of the event $A$. By the Chinese remainder theorem, for some unique $r\in\Z/M\Z$,
\begin{multline*}
\frac{1}{n}\#\left\{k\in \{1,2,\ldots,\lfloor nt\rfloor\}: k\equiv r_i\left(\mod p_i^{l_i}\right),i=1,\ldots,m\right\}\\
=\frac{1}{n}\#\left\{k\in \{1,2,\ldots,\lfloor nt\rfloor\}: k\equiv r\left(\mod M\right)\right\}=\frac{1}{n}\sum_{l\geq 0}\1\{r+lM\leq nt\}=\frac{1}{n}\left\lfloor\frac{nt-r}{M}\right\rfloor,
\end{multline*}
and the right-hand side converges to $tM^{-1}$, as $n\to\infty$. Thus,~\eqref{eq:thm1_proof} holds and the proof is complete.
\end{proof}

Let $f_1,\ldots,f_m\in \Z[x_1,\ldots,x_s]$ be $m$ polynomials in $s$ variables over $\Z$. Since $\phi$ is a homomorphism, we have $\phi(f(x_1,\ldots,x_s))=f(\phi(x_1),\ldots,\phi(x_s))$, for every $f\in\Z[x_1,\ldots,x_s]$. By the continuous mapping theorem we obtain the following corollary. Let $\mathcal{V}_1,\ldots,\mathcal{V}_s$ be independent copies of $\mathcal{V}$ and recall the notation $\mathbf{U}_n^{(s)} = (U_{n,1},\ldots,U_{n,s})$ for a uniformly distributed on $\{1,\ldots, n\}^s$ random vector. Then, with $\mathbf{U}^{(s)}_{\infty}$ being uniformly distributed on $[0,1]^s$ and independent of $\mathcal{V}_1,\ldots,\mathcal{V}_s$ the following corollary holds true.

\begin{corollary}\label{cor:poly}
As $n\to\infty$,
$$
\left(\phi(f_1(\mathbf{U}_n^{(s)})),\ldots,\phi(f_m(\mathbf{U}_n^{(s)})),\frac 1n \mathbf{U}_n^{(s)}\right)~\todistr~\left(f_1(\mathcal{V}_1,\ldots,\mathcal{V}_s),\ldots,f_m(\mathcal{V}_1,\ldots,\mathcal{V}_s),\mathbf{U}^{(s)}_{\infty}\right).
$$
\end{corollary}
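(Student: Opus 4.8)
The plan is to obtain Corollary~\ref{cor:poly} from Proposition~\ref{prop:main_adeles} via the continuous mapping theorem, after upgrading the single‑coordinate statement there to a joint one for the $s$ independent coordinates of $\mathbf{U}_n^{(s)}$.

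First I would note that, for each fixed $n$, the random pairs $(\phi(U_{n,1}),U_{n,1}/n),\ldots,(\phi(U_{n,s}),U_{n,s}/n)$ are independent and identically distributed, being images of the i.i.d.\ coordinates $U_{n,1},\ldots,U_{n,s}$ under one and the same measurable map. By Proposition~\ref{prop:main_adeles} the common law of these pairs converges weakly to the law of $(\mathcal{V},U_\infty)$. Since $\widehat{\Z}\times[0,1]$ is a compact metric space, hence separable, weak convergence of the marginals forces weak convergence of the $s$‑fold product laws on the product space; after the obvious homeomorphic rearrangement of coordinates this gives
$$
\left(\phi(U_{n,1}),\ldots,\phi(U_{n,s}),\tfrac1n\mathbf{U}_n^{(s)}\right)~\todistr~\left(\mathcal{V}_1,\ldots,\mathcal{V}_s,\mathbf{U}^{(s)}_\infty\right),\qquad n\to\infty,
$$
on $\widehat{\Z}^s\times[0,1]^s$, with $\mathcal{V}_1,\ldots,\mathcal{V}_s$ independent copies of $\mathcal{V}$ and $\mathbf{U}^{(s)}_\infty$ independent of them (a standard product‑measure fact, see e.g.~\cite{Billingsley}).

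Next I would verify the continuity of the map
$$
F\colon\widehat{\Z}^s\times[0,1]^s\longrightarrow\widehat{\Z}^m\times[0,1]^s,\qquad F(\mathbf{x}_1,\ldots,\mathbf{x}_s,\mathbf{t})=\bigl(f_1(\mathbf{x}_1,\ldots,\mathbf{x}_s),\ldots,f_m(\mathbf{x}_1,\ldots,\mathbf{x}_s),\mathbf{t}\bigr).
$$
The $p$‑th component of $f_j(\mathbf{x}_1,\ldots,\mathbf{x}_s)$ depends only on the $p$‑th components $x_{1,p},\ldots,x_{s,p}\in\Z_p$, and polynomial evaluation $\Z_p^s\to\Z_p$ is continuous because $\Z_p$ is a topological ring; composing with the continuous projections $\widehat{\Z}\to\Z_p$ and using that $\widehat{\Z}^m$ carries the product topology shows that every coordinate of $F$, hence $F$ itself, is continuous.

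Finally I would apply the continuous mapping theorem to $F$ and the displayed convergence. On the left‑hand side, the identity $\phi(f_j(\mathbf{U}_n^{(s)}))=f_j(\phi(U_{n,1}),\ldots,\phi(U_{n,s}))$, valid because $\phi$ is a ring homomorphism as recorded before the corollary, shows that $F$ applied to $(\phi(U_{n,1}),\ldots,\phi(U_{n,s}),\mathbf{U}_n^{(s)}/n)$ equals the vector in the statement; on the right‑hand side $F(\mathcal{V}_1,\ldots,\mathcal{V}_s,\mathbf{U}^{(s)}_\infty)$ is precisely the asserted limit. There is no substantial obstacle here: the only two points that need any care are the passage from marginal to joint convergence (handled by the product‑measure fact, which uses separability of $\widehat{\Z}\times[0,1]$) and the continuity of polynomial evaluation for the product topology on $\widehat{\Z}$, both of which are routine.
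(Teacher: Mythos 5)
Your proposal is correct and follows essentially the same route the paper takes: deduce joint convergence of the $s$ i.i.d.\ coordinate pairs from Proposition~\ref{prop:main_adeles} (using the standard separable-metric-space product fact), invoke the homomorphism identity $\phi(f_j(\mathbf{U}_n^{(s)}))=f_j(\phi(U_{n,1}),\ldots,\phi(U_{n,s}))$, and apply the continuous mapping theorem, with polynomial evaluation on $\widehat{\Z}^s$ continuous because it acts coordinatewise in $p$ and each $\Z_p$ is a topological ring. You have merely spelled out details the paper leaves implicit; there is no gap.
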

In what follows it is important that, for every fixed $f\in \Z[x_1,\ldots,x_s]$, the projections $\pi^{(p)}(f(\mathcal{V}_1,\ldots,\mathcal{V}_s))$, $p\in\mathcal{P}$, are mutually independent. This follows from the fact that $\pi^{(p)}(\mathcal{V})$, $p\in\mathcal{P}$, are independent and $\pi^{(p)}$ is a ring homomorphism, thus, commutes with any polynomial.

For $n\in\Z\setminus\{0\}$ and $p\in\mathcal{P}$ let $\lambda_p(n)$ denote the power of prime $p$ in the prime decomposition of $|n|$, so
$$
|n|=\prod_{p\in\mathcal{P}}p^{\lambda_p(n)}.
$$
We have an obvious relation $\lambda_p(n)=\inf\{k\geq 0:\pi_k^{(p)}(\phi(n))>0\}$,
which advocates the usage of the same notation $\lambda_p$ for the following function defined on $\widehat{\mathbb Z}$:
$$
\lambda_p\left(x\right)=\inf\{k\geq 0:\pi_k^{(p)}(x)>0\},\quad x\in\widehat{\mathbb Z}.
$$
This definition also shows that it is natural to stipulate $\lambda_p(0):=+\infty$, $p\in\mathcal{P}$. Our main result implies the next two corollaries. The first one is well-known, see, for instance, Lemma 3.1 in~\cite{BosMarRas:2019}, the second one seems to be new. Set
$$
\mathcal{G}_p:=\lambda_p(\mathcal{V})=\inf\{k\geq 0:\pi_k^{(p)}(\mathcal{V})>0\},\quad p\in\mathcal{P}.
$$
\begin{corollary}\label{cor:geometrics}
The random variables $\mathcal{G}_p$, $p\in\mathcal{P}$, and $U_{\infty}$ are mutually independent and $\mathcal{G}_p$ has a geometric distribution
\begin{equation}\label{eq:geometric}
\mathbb{P}\{\mathcal{G}_p\geq k\}=\frac{1}{p^k},\quad k\geq 0,\quad p\in\mathcal{P}.
\end{equation}
Furthermore,
$$
\left(\left(\lambda_p(U_n)\right)_{p\in\mathcal{P}},\frac{U_n}{n}\right)~\todistr ((\mathcal{G}_p)_{p\in\mathcal{P}},U_{\infty}),\quad n\to\infty.
$$
\end{corollary}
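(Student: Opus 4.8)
The plan is to obtain both parts of the corollary from Proposition~\ref{prop:main_adeles}: the distributional and independence statements about the limit by a direct computation, and the convergence in distribution by the continuous mapping theorem.

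For the limit itself, note that $\mathcal{G}_p=\lambda_p(\mathcal V)=\inf\{k\ge 0:\ukp\neq 0\}$ is a measurable function of the digit sequence $(\ukp)_{k\ge 0}$ defining $V_p$ in~\eqref{eq:v_p}; since these digits are i.i.d.\ uniform on $\{0,1,\ldots,p-1\}$, the event $\{\mathcal{G}_p\ge k\}$ coincides with $\{u_{0,p}=\cdots=u_{k-1,p}=0\}$ and has probability $p^{-k}$, which is~\eqref{eq:geometric} (in particular $\mathcal{G}_p<\infty$ a.s.). Mutual independence of $(\mathcal{G}_p)_{p\in\mathcal P}$ and $U_\infty$ is then immediate, because $\mathcal{G}_p$ depends on $V_p$ alone while $U_\infty,V_2,V_3,V_5,\ldots$ are mutually independent by Proposition~\ref{prop:main_adeles}.

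For the convergence, the idea is to apply the continuous mapping theorem to the convergence $(\phi(U_n),U_n/n)\todistr(\mathcal V,U_\infty)$ of Proposition~\ref{prop:main_adeles} through the map $(x,t)\mapsto((\lambda_p(x))_{p\in\mathcal P},t)$. To legitimize this, one regards each $\lambda_p$ as a map from $\widehat{\Z}$ into $\N_0\cup\{+\infty\}$ endowed with the order topology, which is a compact metric space. For every finite $j$ the set $\{\lambda_p=j\}=\{x:\pi^{(p)}_0(x)=\cdots=\pi^{(p)}_{j-1}(x)=0,\ \pi^{(p)}_j(x)\neq 0\}$ is a cylinder set, hence clopen; from this one checks that the preimage of every open subset of $\N_0\cup\{+\infty\}$ is open, so $\lambda_p$ is continuous. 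Since continuity into a countable product reduces to continuity of each coordinate, the map $(x,t)\mapsto((\lambda_p(x))_{p\in\mathcal P},t)$ is continuous on $\widehat{\Z}\times[0,1]$, and the continuous mapping theorem yields
$$
\left((\lambda_p(\phi(U_n)))_{p\in\mathcal P},\tfrac{U_n}{n}\right)~\todistr~\left((\lambda_p(\mathcal V))_{p\in\mathcal P},U_\infty\right).
$$
It then remains to observe that $\lambda_p(\phi(U_n))=\lambda_p(U_n)$ by the relation $\lambda_p(n)=\inf\{k\ge 0:\pi^{(p)}_k(\phi(n))>0\}$ recorded earlier, and that $\lambda_p(\mathcal V)=\mathcal G_p$ by definition, which is exactly the asserted limit.

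The only point needing any care is the topology on the range of $\lambda_p$: one must admit the value $+\infty$ so that $\lambda_p$ stays continuous at those adeles whose $p$-component vanishes. As the set of such adeles is $\widehat\mu$-null, an alternative is to invoke the form of the continuous mapping theorem that requires continuity only outside a null set of the limiting law, or, even more elementarily, to compute the relevant finite-dimensional probabilities directly from Proposition~\ref{prop:main_adeles}; so this is a minor technicality rather than a genuine obstacle.
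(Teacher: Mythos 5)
Your proof is correct. The paper does not give an explicit argument for this corollary (it merely remarks that it follows from Proposition~\ref{prop:main_adeles} and cites a reference for the well-known part), and what you have written is precisely the natural filling-in of that remark: the geometric law and independence read off directly from the digit construction~\eqref{eq:v_p}, and the distributional convergence obtained by pushing forward the conclusion of Proposition~\ref{prop:main_adeles} through $(x,t)\mapsto((\lambda_p(x))_{p\in\mathcal P},t)$ via the continuous mapping theorem; your care about the range $\N_0\cup\{+\infty\}$ (equivalently, the $\widehat\mu$-null discontinuity set, or simply the fact that the paper declares all such convergences to mean convergence of finite-dimensional projections) resolves the one genuine technical point.
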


\begin{corollary}\label{cor:polys_p_adic_norms_converge}
For polynomials $f_1,\ldots, f_m\in \Z[x_1,\ldots,x_s]$ with the same notation as in Corollary~\ref{cor:poly} we have
$$
\left(\lambda_p(f_i(\mathbf{U}^{(s)}_n))\right)_{p\in\mathcal{P}, i=1,\ldots, m}~\todistr~\left(\lambda_p(f_i(\mathcal{V}_1,\ldots,\mathcal{V}_s))\right)_{p\in\mathcal{P}, i=1,\ldots, m},\quad n\to\infty.
$$
For every $i\in \{1,\ldots, m\}$, the limiting random variables $\lambda_p(f_i(\mathcal{V}_1,\ldots,\mathcal{V}_s))$, $p\in\mathcal{P}$, are mutually independent.
\end{corollary}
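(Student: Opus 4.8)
The plan is to obtain both assertions from Corollary~\ref{cor:poly} through the continuous mapping theorem, the only subtlety being the correct choice of target space for the maps $\lambda_p$. Since $\lambda_p$ equals $+\infty$ exactly on the set $\{x\in\widehat{\Z}:\pi^{(p)}(x)=0\}$, it is \emph{not} continuous as a map into $\N_0$ with the discrete topology; it \emph{is}, however, continuous as a map $\lambda_p\colon\widehat{\Z}\to\overline{\N}_0$, where $\overline{\N}_0:=\N_0\cup\{+\infty\}$ carries the one-point compactification topology, for which the singletons $\{k\}$ and the tails $\{k,k+1,\ldots\}\cup\{+\infty\}$, $k\in\N_0$, form a subbasis. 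First I would verify this continuity: one has $\lambda_p^{-1}(\{k\})=\{x:\pi^{(p)}_0(x)=\cdots=\pi^{(p)}_{k-1}(x)=0,\ \pi^{(p)}_k(x)\neq 0\}$ and $\lambda_p^{-1}(\{k,k+1,\ldots\}\cup\{+\infty\})=\{x:\pi^{(p)}_0(x)=\cdots=\pi^{(p)}_{k-1}(x)=0\}$, and each of these depends on finitely many of the continuous, discrete-valued projections $\pi^{(p)}_j$, hence is clopen in $\widehat{\Z}$.

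Next, since convergence of infinite-dimensional vectors is understood in the product topology, it suffices to fix finitely many pairwise distinct primes $p_1,\ldots,p_r$ and to prove convergence of $\bigl(\lambda_{p_l}(f_i(\mathbf{U}_n^{(s)}))\bigr)_{1\le l\le r,\ 1\le i\le m}$ in $\overline{\N}_0^{\,rm}$. The map $g\colon\widehat{\Z}^{\,m}\to\overline{\N}_0^{\,rm}$ defined by $g(x_1,\ldots,x_m)=\bigl(\lambda_{p_l}(x_i)\bigr)_{l,i}$ is continuous by the previous paragraph, so applying the continuous mapping theorem to the convergence in Corollary~\ref{cor:poly} (after discarding the $[0,1]^s$-component, which is irrelevant here) gives
\[
g\bigl(\phi(f_1(\mathbf{U}_n^{(s)})),\ldots,\phi(f_m(\mathbf{U}_n^{(s)}))\bigr)~\todistr~g\bigl(f_1(\mathcal{V}_1,\ldots,\mathcal{V}_s),\ldots,f_m(\mathcal{V}_1,\ldots,\mathcal{V}_s)\bigr).
\]
By the identity $\lambda_p(N)=\inf\{k\ge 0:\pi^{(p)}_k(\phi(N))>0\}$ noted just before the statement, the left-hand side equals $\bigl(\lambda_{p_l}(f_i(\mathbf{U}_n^{(s)}))\bigr)_{l,i}$, which is the desired convergence in distribution.

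For the independence claim, fix $i$. Because $\pi^{(p)}$ is a ring homomorphism it commutes with $f_i$, so $\pi^{(p)}(f_i(\mathcal{V}_1,\ldots,\mathcal{V}_s))=f_i(\pi^{(p)}(\mathcal{V}_1),\ldots,\pi^{(p)}(\mathcal{V}_s))$, and the value $\lambda_p(f_i(\mathcal{V}_1,\ldots,\mathcal{V}_s))$ depends on $f_i(\mathcal{V}_1,\ldots,\mathcal{V}_s)$ only through its $p$-th coordinate; hence it is a measurable function of the $\Z_p^s$-valued vector $(\pi^{(p)}(\mathcal{V}_1),\ldots,\pi^{(p)}(\mathcal{V}_s))$. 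Since $\mathcal{V}_1,\ldots,\mathcal{V}_s$ are independent and each is $\widehat{\mu}$-distributed, hence has independent coordinates across primes, the family $\{(\pi^{(p)}(\mathcal{V}_1),\ldots,\pi^{(p)}(\mathcal{V}_s)):p\in\mathcal{P}\}$ is independent, and therefore so is $\{\lambda_p(f_i(\mathcal{V}_1,\ldots,\mathcal{V}_s)):p\in\mathcal{P}\}$ (this also recovers the observation stated before Corollary~\ref{cor:geometrics}). I expect the first paragraph to be the only genuinely delicate point — one must apply the continuous mapping theorem with $\overline{\N}_0$, not $\N_0$, since $\lambda_p$ really does fail to be continuous into the discrete integers; once that is settled, the rest is straightforward bookkeeping on top of Corollary~\ref{cor:poly}.
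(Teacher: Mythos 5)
Your proof is correct and follows exactly the route the paper intends: the paper states this as a corollary with no separate proof, leaving it to follow from Corollary~\ref{cor:poly} via the continuous mapping theorem (for the convergence) together with the remark after Corollary~\ref{cor:poly} (for the independence across $p$). You spell out the one detail the paper leaves tacit — that $\lambda_p$ must be viewed as a continuous map into the one-point compactification $\overline{\N}_0$, not into discrete $\N_0$ — and your verification that preimages of the subbasis sets $\{k\}$ and $\{k,k+1,\ldots\}\cup\{+\infty\}$ are clopen is exactly right; the rest (finite-dimensional projections, factoring $\lambda_p\circ f_i$ through $(\pi^{(p)}(\mathcal{V}_1),\ldots,\pi^{(p)}(\mathcal{V}_s))$) matches the paper's framework.
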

\begin{prop}\label{prop:lambdas_finite}
If $f\in \Z[x_1,\ldots,x_s]$ is a non-zero polynomial with integer coefficients, then $\mathbb{P}\{\lambda_p(f(\mathcal{V}_1,\ldots,\mathcal{V}_s)=+\infty\}=0$ for every $p\in\mathcal{P}$.
\end{prop}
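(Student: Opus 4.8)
The plan is to reduce the claim to a statement about the Haar measure on $\mathbb{Z}_p^s$ and then prove that statement by induction on the number $s$ of variables.

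First I would observe that, since $\pi^{(p)}$ is a ring homomorphism commuting with polynomial evaluation and $\pi^{(p)}(\mathcal{V}_j)=V_p^{(j)}$ where $V_p^{(1)},\ldots,V_p^{(s)}$ are independent with common distribution $\mu_p$, one has $\pi^{(p)}(f(\mathcal{V}_1,\ldots,\mathcal{V}_s))=f(V_p^{(1)},\ldots,V_p^{(s)})$. By the definition of $\lambda_p$ on $\widehat{\mathbb{Z}}$, the event $\{\lambda_p(x)=+\infty\}$ is exactly the event that the $p$-component of $x$ vanishes, so it suffices to show $\mathbb{P}\{f(V_p^{(1)},\ldots,V_p^{(s)})=0\}=0$. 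Since $\mathbb{Z}\hookrightarrow\mathbb{Z}_p$ is injective, $f$ remains a non-zero element of $\mathbb{Z}_p[x_1,\ldots,x_s]$, so the claim follows once we know: for any non-zero $g\in\mathbb{Z}_p[x_1,\ldots,x_s]$, the zero set $Z_g:=\{x\in\mathbb{Z}_p^s:g(x)=0\}$ has $\mu_p^{\otimes s}$-measure zero.

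Next I would prove this last statement by induction on $s$. For $s=1$ a non-zero polynomial over the field $\mathbb{Q}_p$ has finitely many roots, and $\mu_p(\{a\})\le\mu_p(a+p^k\mathbb{Z}_p)=p^{-k}$ for all $k$ forces every singleton to be $\mu_p$-null, hence $Z_g$ is null. For $s\ge 2$ I would write $g=\sum_{j=0}^{d}g_j(x_1,\ldots,x_{s-1})\,x_s^j$, fix some $j_0$ with $g_{j_0}\ne 0$, and use Fubini with $\mu_p^{\otimes s}=\mu_p^{\otimes(s-1)}\otimes\mu_p$:
$$
\mu_p^{\otimes s}(Z_g)=\int_{\mathbb{Z}_p^{s-1}}\mu_p\bigl(\{x_s\in\mathbb{Z}_p:g(y,x_s)=0\}\bigr)\,\mu_p^{\otimes(s-1)}(\dint y).
$$
For fixed $y$, the inner integrand is the $\mu_p$-measure of the root set of the one-variable polynomial $x_s\mapsto g(y,x_s)$; this vanishes by the base case unless that polynomial is identically zero, which in turn forces $g_{j_0}(y)=0$. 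By the induction hypothesis applied to $g_{j_0}\in\mathbb{Z}_p[x_1,\ldots,x_{s-1}]$, the set of such $y$ is $\mu_p^{\otimes(s-1)}$-null, so the integrand is a.e.\ zero and the integral vanishes.

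I do not expect a serious obstacle: the argument is elementary once the right reduction is in place. The one point needing care is the first paragraph, namely correctly translating the adelic event $\{\lambda_p(f(\mathcal{V}_1,\ldots,\mathcal{V}_s))=+\infty\}$ into the statement that $f$ vanishes at an i.i.d.\ Haar-distributed point of $\mathbb{Z}_p^s$, which relies on $\pi^{(p)}$ being a ring homomorphism that commutes with polynomials and on the independence of the $\mathcal{V}_j$. Alternatively one could skip the explicit induction and invoke the general fact that a proper Zariski-closed subset of $\mathbb{Z}_p^s$ is Haar-null, but the inductive Fubini argument above is self-contained and short.
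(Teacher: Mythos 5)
Your argument is correct and matches the paper's proof almost step for step: the paper makes the same reduction to $\mu_p^{\otimes s}(\{x\in\mathbb{Z}_p^s : f(x)=0\})=0$ and proves this by the same Fubini-plus-induction argument (isolated there as Lemma~\ref{lem:haar_of_variety_is_zero} in the Appendix), differing only in cosmetic choices such as which variable is integrated out and which non-zero coefficient triggers the induction hypothesis.
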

\begin{proof}
Recall  that $(\pi^{(p)}(\mathcal{V}_1),\ldots,\pi^{(p)}(\mathcal{V}_s))$ is distributed according to the product measure $\mu_p^{\otimes s}$, where $\mu_p$ is the Haar measure on $\Z_p$. Hence, $\mathbb{P}\{\lambda_p(f(\mathcal{V}_1,\ldots,\mathcal{V}_s))=+\infty\} = \mu_p^{\otimes s}(\{x\in \mathbb{Z}_p^s:f(x)=0\})$.
By Lemma~\ref{lem:haar_of_variety_is_zero} in the Appendix the right-hand side is equal to zero.
\end{proof}

\section{Main results}\label{sec:results_polynomials}

For a multiset $A:=\{a_1,\ldots,a_m\}\subset\Z$, let $\GCD(A)$ denote the greatest common divisor, $\LCM(A)$ the least common multiple and $\NLCM(A)$ the normalized least common multiple of a multiset $\{|a_1|,\ldots,|a_m|\}\subset\{0,1,2,\ldots\}$, respectively.  If $0\notin A$, then $\NLCM(A)$ by definition is equal to
$$
\NLCM(A):=\frac{\LCM(A)}{\prod_{i=1}^{m}|a_i|}.
$$
If $A$ contains zero, then we stipulate $\GCD(A):=\GCD(A\setminus\{0\})$, $\LCM(A):=0$ and $\NLCM(A):=\NLCM(A\setminus\{0\})$.

\begin{theorem}\label{theo:gcd}
Let $f_1,\ldots, f_m\in\Z[x_1,\ldots,x_s]$ be $m\geq 2$ non-zero polynomials that do not have a common factor of degree $>0$. Then
$$
\GCD(f_1(\mathbf{U}_n^{(s)}),\ldots,f_m(\mathbf{U}_n^{(s)}) )~\todistr~G_{f_1,\ldots, f_m}, \quad n\to\infty,
$$
for some random variable $G_{f_1,\ldots, f_m}$ with values in $\N$. More concretely, we have
$$
\log G_{f_1,\ldots, f_m}=\sum_{p\in\mathcal{P}}\log p \min_{i=1,\ldots, m} \lambda_p (f_i(\mathcal{V}_1,\ldots,\mathcal{V}_s)),
$$
and the series on the right-hand side converges a.s.
\end{theorem}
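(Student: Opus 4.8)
The plan is to reduce the statement to a convergence of sums of $p$-adic contributions and then to apply the standard approximation criterion for convergence in distribution. Write $X_n:=\GCD(f_1(\mathbf{U}_n^{(s)}),\ldots,f_m(\mathbf{U}_n^{(s)}))$ and recall that for integers $a_1,\ldots,a_m$, not all zero, one has $\lambda_p(\GCD(a_1,\ldots,a_m))=\min_i\lambda_p(a_i)$ for every $p$, with the convention $\lambda_p(0)=+\infty$ (which makes the identity hold even if some, but not all, of the $a_i$ vanish). By the Schwartz--Zippel bound applied to the nonzero polynomial $f_1$, with probability $1-O(n^{-1})$ at least one of $f_1(\mathbf{U}_n^{(s)}),\ldots,f_m(\mathbf{U}_n^{(s)})$ is nonzero; on the complementary event $X_n$ vanishes, which can be ignored when computing limits of distributions, so I shall work on the good event, where
$$
\log X_n=\sum_{p\in\mathcal{P}}(\log p)\,\min_{i=1,\ldots,m}\lambda_p(f_i(\mathbf{U}_n^{(s)})),
$$
a finite sum. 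Put $S_n^{(N)}:=\sum_{p\le N}(\log p)\min_i\lambda_p(f_i(\mathbf{U}_n^{(s)}))$ and $L_N:=\sum_{p\le N}(\log p)\min_i\lambda_p(f_i(\mathcal{V}_1,\ldots,\mathcal{V}_s))$.

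First I would prove $S_n^{(N)}\todistr L_N$ as $n\to\infty$ for each fixed $N$. By Corollary~\ref{cor:polys_p_adic_norms_converge} the finite vector $(\lambda_p(f_i(\mathbf{U}_n^{(s)})))_{p\le N,\,i\le m}$ converges in distribution to $(\lambda_p(f_i(\mathcal{V}_1,\ldots,\mathcal{V}_s)))_{p\le N,\,i\le m}$, whose coordinates are almost surely finite by Proposition~\ref{prop:lambdas_finite}; since the map $(v_{p,i})\mapsto\sum_{p\le N}(\log p)\min_i v_{p,i}$ is continuous at every point all of whose coordinates are finite, the continuous mapping theorem gives the claim.

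Next I would show that the limit series $L:=\sum_{p\in\mathcal{P}}(\log p)\min_i\lambda_p(f_i(\mathcal{V}_1,\ldots,\mathcal{V}_s))$ converges almost surely, so that $G_{f_1,\ldots,f_m}:=\prod_{p\in\mathcal{P}}p^{\min_i\lambda_p(f_i(\mathcal{V}_1,\ldots,\mathcal{V}_s))}$ is a well-defined $\N$-valued random variable with $\log G_{f_1,\ldots,f_m}=L$, and hence $L_N\todistr L$ as $N\to\infty$. The decisive point is the hypothesis that $f_1,\ldots,f_m$ have no common factor of positive degree: since $\Q[x_1,\ldots,x_s]$ is a unique factorization domain, no irreducible polynomial divides all of them, so the affine variety $V:=\{x:f_1(x)=\cdots=f_m(x)=0\}$ has no codimension-one irreducible component — each such component would be the zero set of an irreducible polynomial dividing every $f_i$ — and therefore $\dim V\le s-2$. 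Consequently the Lang--Weil bounds give $\#V(\mathbb{F}_p)=O(p^{s-2})$ for all $p$, so that, writing $\mu_p$ for the Haar measure on $\Z_p$ and using (as noted after Corollary~\ref{cor:poly}) that $(\pi^{(p)}(\mathcal{V}_1),\ldots,\pi^{(p)}(\mathcal{V}_s))$ has law $\mu_p^{\otimes s}$ and that these are independent over $p$,
$$
\mathbb{P}\big\{\min_i\lambda_p(f_i(\mathcal{V}_1,\ldots,\mathcal{V}_s))\ge 1\big\}=\mu_p^{\otimes s}\big(\{x\in\Z_p^s:f_i(x)\equiv 0\ (\mod p),\ i=1,\ldots,m\}\big)=\frac{\#V(\mathbb{F}_p)}{p^s}=O(p^{-2}).
$$
Since $\sum_p p^{-2}<\infty$, the Borel--Cantelli lemma (applicable because, for fixed $i$, the $\lambda_p(f_i(\mathcal{V}_1,\ldots,\mathcal{V}_s))$, $p\in\mathcal{P}$, are independent) shows that almost surely only finitely many primes contribute a nonzero summand to $L$; each such summand is almost surely finite by Proposition~\ref{prop:lambdas_finite}; hence $L<\infty$ a.s.

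Finally I would invoke the approximation criterion for convergence in distribution (see, e.g., Theorem~3.2 in~\cite{Billingsley}) together with the two convergences above; what remains is the uniform tail bound
$$
\lim_{N\to\infty}\limsup_{n\to\infty}\mathbb{P}\Big\{\sum_{p>N}(\log p)\min_i\lambda_p(f_i(\mathbf{U}_n^{(s)}))>\varepsilon\Big\}=0\qquad(\varepsilon>0).
$$
Since a nonzero term of this tail sum exceeds $\log N$, the left-hand side equals $\lim_{N\to\infty}\limsup_{n\to\infty}\mathbb{P}\{\exists\,p>N:\ p\mid f_i(\mathbf{U}_n^{(s)})\text{ for all }i\}$, and it suffices to show this vanishes — in other words, that the largest prime factor of $X_n$ stays tight. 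Write $D:=\max_i\deg f_i$ and fix a constant $C$ (depending on $f_1,\ldots,f_m$) with $\max_i|f_i(\mathbf{a})|<Cn^D$ for all $\mathbf{a}\in\{1,\ldots,n\}^s$. For primes $N<p\le n$ the probability that $\mathbf{U}_n^{(s)}$ reduces modulo $p$ into $V(\mathbb{F}_p)$ is $O(p^{-2})$ by Lang--Weil, with total contribution $O(\sum_{p>N}p^{-2})\to 0$; for $p>Cn^D$, $p\mid f_i(\mathbf{U}_n^{(s)})$ for all $i$ forces $f_i(\mathbf{U}_n^{(s)})=0$ for all $i$, an event of probability $o(1)$ by Schwartz--Zippel. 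The intermediate range $n<p\le Cn^D$, where $\mathbf{U}_n^{(s)}$ runs over a box small compared with $p$ and one must prevent the $O(p^{s-2})$ points of $V(\mathbb{F}_p)$ from clustering inside that box, is the genuine obstacle; this is precisely the content of the technical estimate established in Section~\ref{sec:abscence}, which rests on Lang--Weil-type counts for the reductions of $V$ (and of related congruence conditions), and which I would quote here. Combining the three ingredients yields $\log X_n\todistr L$, and since $\exp$ is continuous and $L$ takes values in $\{\log k:k\in\N\}$, the continuous mapping theorem gives $X_n\todistr G_{f_1,\ldots,f_m}$.
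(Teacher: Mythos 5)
Your proposal follows essentially the same route as the paper's proof: reduce $\log\GCD$ to the series $\sum_{p}(\log p)\min_i\lambda_p(f_i(\cdot))$, obtain convergence of the finite truncation from Corollary~\ref{cor:polys_p_adic_norms_converge} and the continuous mapping theorem, establish a.s.\ convergence of the limiting series from the dimension bound $\dim A_{\Q}(f_1,\ldots,f_m)\le s-2$ together with Lang--Weil and Borel--Cantelli, and close via Billingsley's Theorem~3.2 after splitting the tail into the ranges $N<p\le n$ (Lang--Weil again) and $p>n$ (delegated to Proposition~\ref{prop:large_prime_common_divisor}). Two small remarks. First, in the range $N<p\le n$ the law of $\mathbf{U}_n^{(s)}$ reduced modulo $p$ is not exactly uniform on $\mathbb{F}_p^s$; the paper makes this precise with the bound $\max_j\mathbb{P}\{U_{n,1}\equiv j\ (\mod p)\}\le 2/p$, which you elide but which is needed to turn the $O(p^{s-2})$ point count into the $O(p^{-2})$ probability bound. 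Second, and more importantly, you describe the estimate of Section~\ref{sec:abscence} as ``resting on Lang--Weil-type counts for the reductions of $V$''; this is a misattribution that would become a genuine gap if you tried to supply the ingredient you are quoting. Proposition~\ref{prop:large_prime_common_divisor} is proved by an entirely different and more elementary mechanism: an induction on the number of variables $s$, whose base case $s=1$ controls common prime divisors $p\ge n$ of univariate values by an effective Euclidean algorithm (Lemmas~\ref{lem:1} and~\ref{lem:2}: no common factor gives $a_1g_1+\cdots+a_mg_m=A$ with $A\le Bn^{B}$, so any such $p$ divides a polynomially bounded integer $A$ with $O(1)$ prime factors, each contributing probability $O(1/n)$), and whose inductive step uses resultants (Lemma~\ref{lem:resultant}) and the Schwartz--Zippel bound to reduce to $s-1$ variables. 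Lang--Weil alone cannot handle $p>n$ for exactly the clustering reason you yourself identify: the box $\{1,\ldots,n\}^s$ is a vanishing fraction of $\mathbb{F}_p^s$ and the $O(p^{s-2})$ points of $V(\mathbb{F}_p)$ could a priori concentrate there, which is why the paper replaces the count-and-average strategy by the algebraic one. Apart from this misattribution, your argument matches the paper's.
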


The proof of Theorem~\ref{theo:gcd} will be given in Section~\ref{subsec:gcd}.

\begin{rem}\label{rem:joint_convergence_all_systems_of_polys}
In fact, almost the same proof shows a slightly stronger version of Theorem~\ref{theo:gcd} in which the GCD's of all tuples of polynomials without a common factor converge \textit{jointly} in distribution. More precisely, for integer $m\geq 2$ let $\mathbb{I}_m$ be the set of all $m$-tuples $(f_1,\ldots, f_m)$ of polynomials from $\Z [x_1,\ldots, x_s]$ that do not have a common factor of degree $>0$. Then, the following distributional convergence of collections of random variables holds:
$$
(\GCD(f_1(\mathbf{U}_n^{(s)}),\ldots,f_m(\mathbf{U}_n^{(s)}))_{m\geq 2, (f_1,\ldots, f_m) \in \mathbb{I}_m}
~\todistr~
(G_{f_1,\ldots, f_m})_{m\geq 2, (f_1,\ldots, f_m) \in \mathbb{I}_m},
\quad n\to\infty.
$$
\end{rem}

\begin{rem}
Upon setting $s:=m$, $f_j(x_1,\ldots,x_s):=x_j$, $j=1,\ldots,m$, we recover the result mentioned in the introduction. Namely,
$$
\GCD(U_{n,1},\ldots,U_{n,s})~\todistr~\prod_{p\in\mathcal{P}}p^{\min_{k=1,\ldots,s}\mathcal{G}_{p,k}},\quad n\to\infty,
$$
where $(\mathcal{G}_{p,k})_{p\in\mathcal{P},k=1,\ldots,s}$ are mutually independent and $\mathcal{G}_{p,k}$ has the geometric distribution~\eqref{eq:geometric} for every $k=1,\ldots,s$ and $p\in\mathcal{P}$. By calculating the moments $\E\left(\prod_{p\in\mathcal{P}}p^{-t\min_{k=1,\ldots,s}\mathcal{G}_{p,k}}\right)$, $t>0$, with the aid of Euler's product formula, we see that
$$
\mathbb{P}\left\{\prod_{p\in\mathcal{P}}p^{\min_{k=1,\ldots,s}\mathcal{G}_{p,k}}=j\right\}=\frac{1}{\zeta(s)j^s},\quad j\in\N,
$$
in full accordance with~\eqref{eq:zeta_distribution}. Interestingly, this distribution has pop up also in the context of profinite integers in~\cite{Rota}.
\end{rem}

\begin{corollary}\label{theo:divisible}
Let $f,g\in\Z[x_1,\ldots,x_s]$ be two polynomials such that $f$ does not divide $g$ over $\mathbb{Q}[x_1,\ldots, x_s]$. Assume that $\deg f \geq 1$. Then
$$
\lim_{n\to\infty}\mathbb{P}\{f(\mathbf{U}_n^{(s)})\text{ divides } g(\mathbf{U}_n^{(s)})\}=0.
$$
\end{corollary}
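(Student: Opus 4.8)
The plan is to deduce the statement from Theorem~\ref{theo:gcd} after two routine reductions; no Lang--Weil input beyond what is already built into that theorem is needed. Write $\mathbf{U}_n:=\mathbf{U}_n^{(s)}$ and $\mathbf{V}:=(\mathcal{V}_1,\ldots,\mathcal{V}_s)$. As a preliminary step I would record that $\mathbb{P}\{h(\mathbf{U}_n)=0\}\to 0$ for every non-zero $h\in\Z[x_1,\ldots,x_s]$, and hence also $\mathbb{P}\{|h(\mathbf{U}_n)|\leq K\}\to 0$ for every non-constant $h$ and every fixed $K$, since $\{|h(\mathbf{U}_n)|\leq K\}$ is a finite union of events $\{(h-c)(\mathbf{U}_n)=0\}$ with each $h-c$ non-zero. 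Both facts already follow from Section~\ref{sec:results_adeles}: for a fixed prime $p$ one has $\{h(\mathbf{U}_n)=0\}\subseteq\{\lambda_p(h(\mathbf{U}_n))\geq K\}$ for every $K$; by Corollary~\ref{cor:polys_p_adic_norms_converge} the variable $\lambda_p(h(\mathbf{U}_n))$ converges in distribution, on $\{0,1,2,\ldots\}\cup\{+\infty\}$, to $\lambda_p(h(\mathbf{V}))$, which is finite almost surely by Proposition~\ref{prop:lambdas_finite}; since $\{K,K+1,\ldots\}\cup\{+\infty\}$ is closed, the portmanteau theorem gives $\limsup_{n\to\infty}\mathbb{P}\{h(\mathbf{U}_n)=0\}\leq\mathbb{P}\{\lambda_p(h(\mathbf{V}))\geq K\}$, and the right-hand side tends to $0$ as $K\to\infty$.

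\emph{Step 1: reduction to coprime $f$ and $g$.} Since $\Z[x_1,\ldots,x_s]$ is a unique factorization domain, let $d:=\gcd(f,g)$ and write $f=d\tilde f$, $g=d\tilde g$ with $\tilde f,\tilde g\in\Z[x_1,\ldots,x_s]$ coprime in $\Q[x_1,\ldots,x_s]$. The hypothesis $f\nmid g$ in $\Q[x_1,\ldots,x_s]$ is equivalent to $\tilde f\nmid\tilde g$ in $\Q[x_1,\ldots,x_s]$, which, by coprimality of $\tilde f$ and $\tilde g$, forces $\deg\tilde f\geq 1$. On the event $\{d(\mathbf{U}_n)\neq 0\}$ one may cancel the non-zero integer $d(\mathbf{U}_n)$, so $f(\mathbf{U}_n)\mid g(\mathbf{U}_n)$ implies $\tilde f(\mathbf{U}_n)\mid\tilde g(\mathbf{U}_n)$; therefore
$$
\mathbb{P}\{f(\mathbf{U}_n)\mid g(\mathbf{U}_n)\}\leq\mathbb{P}\{d(\mathbf{U}_n)=0\}+\mathbb{P}\{\tilde f(\mathbf{U}_n)\mid\tilde g(\mathbf{U}_n)\},
$$
and $\mathbb{P}\{d(\mathbf{U}_n)=0\}\to 0$ by the preliminary step (trivially so when $d$ is a non-zero constant). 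Hence it suffices to prove the statement under the additional assumptions that $f$ and $g$ are coprime in $\Q[x_1,\ldots,x_s]$ and $\deg f\geq 1$, which I now assume.

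\emph{Step 2: conclusion via Theorem~\ref{theo:gcd}.} If $f(\mathbf{U}_n)\mid g(\mathbf{U}_n)$, then $\GCD(f(\mathbf{U}_n),g(\mathbf{U}_n))=|f(\mathbf{U}_n)|$, so for every $K$,
$$
\{f(\mathbf{U}_n)\mid g(\mathbf{U}_n)\}\subseteq\{|f(\mathbf{U}_n)|\leq K\}\cup\{\GCD(f(\mathbf{U}_n),g(\mathbf{U}_n))>K\}.
$$
Since $f$ and $g$ are non-zero and have no common factor of positive degree, Theorem~\ref{theo:gcd} applies and gives $\GCD(f(\mathbf{U}_n),g(\mathbf{U}_n))\todistr G_{f,g}$ with $G_{f,g}$ almost surely finite, whence $\limsup_{n\to\infty}\mathbb{P}\{\GCD(f(\mathbf{U}_n),g(\mathbf{U}_n))>K\}\leq\mathbb{P}\{G_{f,g}>K\}$. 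Combined with $\mathbb{P}\{|f(\mathbf{U}_n)|\leq K\}\to 0$ from the preliminary step (this is the point where $\deg f\geq 1$ enters), this yields $\limsup_{n\to\infty}\mathbb{P}\{f(\mathbf{U}_n)\mid g(\mathbf{U}_n)\}\leq\mathbb{P}\{G_{f,g}>K\}$ for every $K$, and letting $K\to\infty$ completes the proof.

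I do not expect any of these steps to pose a genuine obstacle: all the real arithmetic content, including the Lang--Weil bounds, is absorbed into Theorem~\ref{theo:gcd}, and everything else is bookkeeping. The one point that requires a little care is that every distributional limit used above is of a $\{0,1,2,\ldots\}\cup\{+\infty\}$- or $\N$-valued random variable with no escape of probability mass to $+\infty$; this is precisely what Proposition~\ref{prop:lambdas_finite} and the $\N$-valuedness of the limit $G_{f,g}$ in Theorem~\ref{theo:gcd} guarantee. Alternatively, one could bypass Theorem~\ref{theo:gcd} altogether and argue, in the coprime case, that $\sum_{p\in\mathcal{P}}\mathbb{P}\{\lambda_p(f(\mathbf{V}))>\lambda_p(g(\mathbf{V}))\}=\infty$ using the Lang--Weil estimates for the number of $\Z/p\Z$-points of $\{f=0\}\setminus\{g=0\}$ together with the independence across primes noted after Corollary~\ref{cor:poly}, but this route is strictly more laborious.
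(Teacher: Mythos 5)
Your proof is correct, and it takes a genuinely different route from the paper's. The paper proves the corollary without any reduction to the coprime case: it factorizes $f$ and $g$ completely into irreducibles over $\Q$, invokes the auxiliary submultiplicativity bound of Lemma~\ref{lem:gcd} to dominate $\GCD(f(\mathbf{U}_n^{(s)}),g(\mathbf{U}_n^{(s)}))$ by a product over pairs of irreducible factors, observes that the off-diagonal pairs contribute tight factors by Theorem~\ref{theo:gcd} while the diagonal pairs contribute $\prod_i h_i(\mathbf{U}_n^{(s)})^{\min(u_i,v_i)}$, and then compares degrees to conclude that $\GCD(f(\mathbf{U}_n^{(s)}),g(\mathbf{U}_n^{(s)}))/n^{\deg f}\toprobab 0$ while $f(\mathbf{U}_n^{(s)})/n^{\deg f}$ has a non-degenerate limit. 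Your argument instead divides out the polynomial $\gcd$ once and for all, which reduces to the coprime case, and then exploits tightness directly: by Theorem~\ref{theo:gcd} the sequence $\GCD(f(\mathbf{U}_n^{(s)}),g(\mathbf{U}_n^{(s)}))$ is tight in $\N$, whereas $|f(\mathbf{U}_n^{(s)})|$ escapes to infinity, so the event $\{\GCD=|f|\}$ becomes negligible. Your route avoids Lemma~\ref{lem:gcd} and the full irreducible factorization, and it makes the mechanism (bounded $\GCD$ versus unbounded $|f|$) more transparent; the paper's route, in exchange, yields the slightly more quantitative statement $\GCD(f(\mathbf{U}_n^{(s)}),g(\mathbf{U}_n^{(s)}))=o_{\mathbb{P}}(n^{\deg f})$ even when $f$ and $g$ are not coprime. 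One small remark: your preliminary step establishing $\mathbb{P}\{h(\mathbf{U}_n^{(s)})=0\}\to 0$ via the $p$-adic valuation and the portmanteau theorem is valid, but the Schwartz--Zippel bound of Lemma~\ref{lem:schwartz_zippel} in the Appendix gives the same conclusion (with an explicit $O(1/n)$ rate) in one line.
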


The proof of Corollary~\ref{theo:divisible} will be given in Section~\ref{subsec:divisible}.

\begin{theorem}\label{theo:lcm}
Let $f_1,\ldots, f_m\in\Z[x_1,\ldots,x_s]$ be $m\geq 2$ non-zero polynomials such that any pair does not share a common factor of degree $>0$. Put $d_i:=\deg f_i$. Then
\begin{equation}\label{eq:lcm_convergence}
\NLCM(f_1(\mathbf{U}_n^{(s)}),\ldots,f_m(\mathbf{U}_n^{(s)}))~\todistr~L_{f_1,\ldots, f_m}, \quad n\to\infty,
\end{equation}
for some random variable $L_{f_1,\ldots, f_m}$ with values in $1/\N:=\{1,1/2,1/3,\ldots\}$. More concretely, we have
$$
\log L_{f_1,\ldots, f_m}=\sum_{p\in\mathcal{P}}\log p \left(\max_{i=1,\ldots, m} \lambda_p (f_i(\mathcal{V}_1,\ldots,\mathcal{V}_s))-\sum_{i=1}^{m} \lambda_p (f_i(\mathcal{V}_1,\ldots,\mathcal{V}_s))\right),
$$
and the series on the right-hand side converges a.s. Moreover, with $\bar f_i \in \Z[x_1,\ldots, x_s]$ denoting a homogeneous polynomial of the same degree as $f_i$ obtained from $f_i$ by dropping all monomials except those having the highest degree $d_i$,  it holds
\begin{equation}\label{eq:lcm_convergence2}
\frac{\LCM(f_1(\mathbf{U}_n^{(s)}),\ldots,f_m(\mathbf{U}_n^{(s)}))}{n^{d_1+\cdots+d_m}}~\todistr~L_{f_1,\ldots, f_m}\prod_{i=1}^{m}\bar{f}_i(\mathbf{U}_{\infty}^{(s)}),
\end{equation}
where $\mathbf{U}_{\infty}^{(s)}$ is independent of $L_{f_1,\ldots, f_m}$ and has the uniform distribution on $[0,1]^s$.
\end{theorem}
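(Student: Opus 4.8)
The plan is to reduce everything to the $p$-adic valuations $\lambda_p(f_i(\mathbf{U}_n^{(s)}))$ and then run a truncation argument over the set of primes. Put $E_n:=\{\,f_i(\mathbf{U}_n^{(s)})\ne 0\text{ for all }i=1,\ldots,m\,\}$; since each $f_i$ is a non-zero polynomial, the Schwartz--Zippel bound gives $\mathbb{P}(E_n^{\mathrm c})=O(1/n)$, so it suffices to argue on $E_n$, on which one has the exact identity
\[
\log\NLCM\bigl(f_1(\mathbf{U}_n^{(s)}),\ldots,f_m(\mathbf{U}_n^{(s)})\bigr)=\sum_{p\in\mathcal{P}}\log p\,\Bigl(\max_{1\le i\le m}\lambda_p(f_i(\mathbf{U}_n^{(s)}))-\sum_{i=1}^{m}\lambda_p(f_i(\mathbf{U}_n^{(s)}))\Bigr),
\]
a finite sum. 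Write $\Lambda_n$ for the argument of $\log$ on the left, $\Lambda_n^{(P)}:=\prod_{p\le P}p^{\,\max_i\lambda_p(f_i(\mathbf{U}_n^{(s)}))-\sum_i\lambda_p(f_i(\mathbf{U}_n^{(s)}))}$ for its truncation to primes $\le P$, and $\Lambda_\infty^{(P)}$ for the analogue of $\Lambda_n^{(P)}$ with $\mathbf{U}_n^{(s)}$ replaced by $(\mathcal{V}_1,\ldots,\mathcal{V}_s)$. The elementary inequality $\sum_{i=1}^m\ell_i-\max_i\ell_i\le\sum_{1\le i<j\le m}\min(\ell_i,\ell_j)$, valid for all non-negative integers $\ell_1,\ldots,\ell_m$, shows that $-\log\Lambda_n\le\sum_{i<j}\log\GCD\bigl(f_i(\mathbf{U}_n^{(s)}),f_j(\mathbf{U}_n^{(s)})\bigr)$. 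Applying the same inequality to the limiting valuations, together with Theorem~\ref{theo:gcd} for each \emph{pair} $(f_i,f_j)$ — which is legitimate precisely because any two of the $f_i$ are coprime — shows that the series defining $\log L_{f_1,\ldots,f_m}$ converges absolutely a.s., that only finitely many of its terms are non-zero, and hence that $L_{f_1,\ldots,f_m}\in 1/\N$ a.s.

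For~\eqref{eq:lcm_convergence} I would invoke the standard three-part approximation criterion for weak convergence (Theorem 3.2 in~\cite{Billingsley}): (i) $\Lambda_n^{(P)}\todistr\Lambda_\infty^{(P)}$ as $n\to\infty$ for each fixed $P$; (ii) $\Lambda_\infty^{(P)}\to L_{f_1,\ldots,f_m}$ as $P\to\infty$; and (iii) $\lim_{P\to\infty}\limsup_{n\to\infty}\mathbb{P}\{|\Lambda_n^{(P)}-\Lambda_n|\ge\varepsilon\}=0$ for every $\varepsilon>0$. Ingredient (i) holds because $\Lambda_n^{(P)}$ is a (locally constant, hence continuous) function of the finitely many valuations $\bigl(\lambda_p(f_i(\mathbf{U}_n^{(s)}))\bigr)_{p\le P,\,1\le i\le m}$ at points where these are finite: apply Corollary~\ref{cor:poly} composed with the maps $\lambda_p$, which are continuous where they are finite, and use Proposition~\ref{prop:lambdas_finite} to ensure that the limiting valuations are finite a.s. Ingredient (ii) is immediate from the a.s. convergence of the series established above.

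The crux is (iii). Since $0\le\Lambda_n\le\Lambda_n^{(P)}\le 1$ and $1-e^{-t}\le t$, we have $\Lambda_n^{(P)}-\Lambda_n\le\sum_{p>P}\log p\,\bigl(\sum_i\lambda_p(f_i(\mathbf{U}_n^{(s)}))-\max_i\lambda_p(f_i(\mathbf{U}_n^{(s)}))\bigr)\le\sum_{i<j}\sum_{p>P}\log p\,\min\bigl(\lambda_p(f_i(\mathbf{U}_n^{(s)})),\lambda_p(f_j(\mathbf{U}_n^{(s)}))\bigr)$, so by Markov's inequality (iii) follows once one knows that, for every coprime pair $(f,g)$,
\[
\lim_{P\to\infty}\ \limsup_{n\to\infty}\ \sum_{p>P}\log p\ \mathbb{E}\Bigl[\min\bigl(\lambda_p(f(\mathbf{U}_n^{(s)})),\lambda_p(g(\mathbf{U}_n^{(s)}))\bigr)\,\1_{E_n}\Bigr]=0.
\]
This uniform tail bound is the long technical estimate proved in Section~\ref{sec:abscence}, and I expect it to be the main obstacle. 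It reduces to bounding, for each prime power $p^k$, the number of $x\in\{1,\ldots,n\}^s$ with $p^k\mid f(x)$ and $p^k\mid g(x)$: for all $p$ outside a fixed finite exceptional set (e.g.\ the primes dividing a suitable resultant-type quantity), coprimality of $f,g$ forces the affine variety $\{f=g=0\}$ to have codimension $\ge 2$, so the Lang--Weil bounds give $O(p^{s-2})$ residues modulo $p$; a Hensel-type induction on $k$ propagates this to $O_k(p^{(s-2)k})$ residues modulo $p^k$ for $p^k\le n$, while a separate ``small box'' argument — fixing all but one variable and counting roots of a univariate polynomial modulo $p^k$ — handles the range $n<p^k\ll n^{\deg f+\deg g}$. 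Summing the resulting bounds produces a series dominated by a constant times $\sum_p\log p/p^2$, uniformly in $n$. Granting (iii), Theorem 3.2 in~\cite{Billingsley} gives~\eqref{eq:lcm_convergence} with the stated formula for $\log L_{f_1,\ldots,f_m}$.

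Finally, for~\eqref{eq:lcm_convergence2} I would use, on $E_n$,
\[
\frac{\LCM\bigl(f_1(\mathbf{U}_n^{(s)}),\ldots,f_m(\mathbf{U}_n^{(s)})\bigr)}{n^{d_1+\cdots+d_m}}=\Lambda_n\cdot\prod_{i=1}^{m}\frac{|f_i(\mathbf{U}_n^{(s)})|}{n^{d_i}},
\]
noting that $|f_i(\mathbf{U}_n^{(s)})|/n^{d_i}=|\bar f_i(\mathbf{U}_n^{(s)}/n)|+O(1/n)$, the $O(1/n)$ being uniform over the sample space since $f_i-\bar f_i$ has degree $\le d_i-1$ and $\bar f_i$ is homogeneous of degree $d_i$. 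By Corollary~\ref{cor:poly} the vector $\bigl((\lambda_p(f_i(\mathbf{U}_n^{(s)})))_{p,i},\,\mathbf{U}_n^{(s)}/n\bigr)$ converges in distribution, with last coordinate tending to $\mathbf{U}_\infty^{(s)}$, uniform on $[0,1]^s$ and independent of $(\mathcal{V}_1,\ldots,\mathcal{V}_s)$. Running the truncation argument on the pair $\bigl(\Lambda_n^{(P)},\,\prod_i|f_i(\mathbf{U}_n^{(s)})|/n^{d_i}\bigr)$ — the extra coordinate $\mathbf{U}_n^{(s)}/n$ coincides in $\Lambda_n$ and $\Lambda_n^{(P)}$, so it does not affect (iii) — yields $\bigl(\Lambda_n,\,\prod_i|f_i(\mathbf{U}_n^{(s)})|/n^{d_i}\bigr)\todistr\bigl(L_{f_1,\ldots,f_m},\,\prod_i|\bar f_i(\mathbf{U}_\infty^{(s)})|\bigr)$ with independent components. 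Since multiplication is continuous this gives~\eqref{eq:lcm_convergence2}, the limit being non-trivial because $\prod_i\bar f_i(\mathbf{U}_\infty^{(s)})\ne 0$ a.s.\ (each $\bar f_i$ being a non-zero homogeneous polynomial).
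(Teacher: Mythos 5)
Your plan follows the paper's high-level scaffolding (Billingsley's three-part criterion, Lang--Weil for the middle range, Section~\ref{sec:abscence} for the large primes), and parts (i), (ii), the a.s.\ convergence of the series via pairwise GCDs, and the derivation of~\eqref{eq:lcm_convergence2} are all sound. The gap is in your treatment of (iii). By passing to $\Lambda_n^{(P)}-\Lambda_n$ on the linear scale and invoking Markov, you commit yourself to bounding $\sum_{p>P}\log p\,\mathbb{E}\bigl[\min(\lambda_p(f),\lambda_p(g))\bigr]$, which requires controlling divisibility by \emph{all} prime powers $p^k$. Your sketch for this — Lang--Weil at level $k=1$, a Hensel-type induction for $p^k\le n$, and a ``small box'' count for $n<p^k\ll n^{\deg f+\deg g}$ — is not carried out, and the details are genuinely delicate: Hensel lifting needs care at singular points of the variety $\{f=g=0\}$ modulo $p$, a univariate polynomial can have up to $\deg\cdot p^{k-1}$ roots modulo $p^k$ rather than $\deg$, and for $p^k>n$ the residues of $\mathbf U_n^{(s)}$ are far from uniform modulo $p^k$. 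None of this machinery is needed. The observation that makes the paper's argument go through is that on the \emph{logarithmic} scale the difference
\[
\log\Lambda_n^{(P)}-\log\Lambda_n=\sum_{p>P}\log p\Bigl(\textstyle\sum_i\lambda_p(f_i(\mathbf U_n^{(s)}))-\max_i\lambda_p(f_i(\mathbf U_n^{(s)}))\Bigr)
\]
is non-negative and, when non-zero, is at least $\log 2$. Hence for $\varepsilon<\log2$ the event $\{\log\Lambda_n^{(P)}-\log\Lambda_n\ge\varepsilon\}$ coincides with the event that some prime $p>P$ has $\lambda_p(f_i(\mathbf U_n^{(s)}))\ge1$ for at least two indices $i$. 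One then only needs a union bound over pairs $(i,j)$ and over primes, Lang--Weil at the mod-$p$ level for $P<p\le n$ (which gives $O(p^{-2})$ per prime), and Proposition~\ref{prop:large_prime_common_divisor} for $p>n$. No expectations, no higher prime powers, no Hensel, no small box. In short: replace Markov's inequality with the discrete-gap observation for $\log\NLCM$, and the hard Section~\ref{sec:abscence} input you need is exactly the paper's Proposition~\ref{prop:large_prime_common_divisor}, not the stronger expected-valuation bound your argument currently relies on.
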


The proof of Theorem~\ref{theo:lcm} will be given in Section~\ref{subsec:lcm}.

\section{Proofs for Section~\ref{sec:results_polynomials}}\label{sec:proofs}

\subsection{Preliminaries: algebraic sets and varieties}
Here we recall some basic notions from algebraic geometry and prove several auxiliary results needed for the proof of Theorem~\ref{theo:gcd}. We refer to Chapter VI in~\cite{Schmidt} for the definitions given below and further properties of algebraic sets and varieties.

Let $\mathbb{K}$ be a field and denote by $\overline{\mathbb{K}}$ its algebraic closure. For a subset $S$ of the ring $\mathbb{K}[x_1,\ldots,x_s]$ of polynomials over $\mathbb{K}$ the set
\begin{equation}\label{eq:variety}
A_{\mathbb{K}}(S):=\{ x \in \overline{\mathbb{K}}^s: g(x) = 0\;\forall g\in S\}
\end{equation}
is called an (affine) $\mathbb{K}$-algebraic set. A $\mathbb{K}$-algebraic set is called irreducible (or an affine $\mathbb{K}$-algebraic variety) if it is not the union of two strictly smaller $\mathbb{K}$-algebraic sets. Every $\mathbb{K}$-algebraic set is a finite union of irreducible $\mathbb{K}$-algebraic varieties, called irreducible components. This decomposition is unique, see Theorems 1I and 1J in~\cite[Chapter VI]{Schmidt}. The dimension of a $\mathbb{K}$-algebraic variety $A_{\mathbb{K}}(S)$ is the maximal length $d\in \{0,1,\ldots,s\}$ of the chains $V_0\subset V_1\subset\cdots\subset V_d$ of distinct nonempty $\mathbb{K}$-algebraic subvarieties of $A_{\mathbb{K}}(S)$. The dimension of a $\mathbb{K}$-algebraic set is the maximum of the dimensions of its irreducible components. The algebraic varieties of dimension $s-1$ are called hypersurfaces.

\begin{lemma}\label{lem:dimension}
Assume that $f_1,\ldots,f_m\in \Z[x_1,\ldots,x_s]$ are $m\geq 2$ non-zero polynomials that do not have a common factor of degree $>0$, then
$$
\dim (A_{\mathbb{Q}}(f_1,\ldots,f_m))\leq s-2.
$$
For $s=1$ this means that $A_{\mathbb{Q}}(f_1,\ldots,f_m)$ is empty.
\end{lemma}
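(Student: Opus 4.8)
The plan is to argue by contradiction, the essential geometric input being the classical description of an irreducible hypersurface as the zero set of a single irreducible polynomial. First I would suppose that $\dim A_{\mathbb{Q}}(f_1,\ldots,f_m)\ge s-1$; then this set is nonempty and, since its dimension is the maximum of the dimensions of its irreducible components, it has an irreducible component $W$ with $\dim W\ge s-1$. Because $W\ne\emptyset$, no $f_i$ can be a nonzero constant, so every $f_i$ is nonconstant and in particular $W\subseteq A_{\mathbb{Q}}(f_1)\subsetneq\overline{\mathbb{Q}}^s$ (a nonzero polynomial does not vanish on all of $\overline{\mathbb{Q}}^s$). As $W$ is a proper nonempty irreducible subvariety of $\overline{\mathbb{Q}}^s$, appending $\overline{\mathbb{Q}}^s$ to a maximal chain of subvarieties of $W$ shows $\dim W<\dim\overline{\mathbb{Q}}^s=s$, hence in fact $\dim W=s-1$; that is, $W$ is a hypersurface.

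Next I would invoke the standard fact (see~\cite[Chapter~VI]{Schmidt}) that an irreducible $\mathbb{Q}$-algebraic variety of dimension $s-1$ in $\overline{\mathbb{Q}}^s$ is of the form $A_{\mathbb{Q}}(h)$ for some irreducible $h\in\mathbb{Q}[x_1,\ldots,x_s]$ --- the underlying point being that $I_{\mathbb{Q}}(W)$ is a height-one prime of the unique factorization domain $\mathbb{Q}[x_1,\ldots,x_s]$ and hence principal --- and that $\deg h\ge 1$ since $W$ is a proper nonempty subset of $\overline{\mathbb{Q}}^s$. Then, for each $i$, the inclusion $W\subseteq A_{\mathbb{Q}}(f_i)$ says that $f_i$ vanishes on $A_{\mathbb{Q}}(h)$, i.e.\ $f_i\in I_{\mathbb{Q}}(A_{\mathbb{Q}}(h))$; the Nullstellensatz for $\mathbb{Q}$-algebraic sets over the perfect field $\mathbb{Q}$ gives $I_{\mathbb{Q}}(A_{\mathbb{Q}}(h))=\sqrt{(h)}=(h)$ because $(h)$ is prime, whence $h\mid f_i$ in $\mathbb{Q}[x_1,\ldots,x_s]$. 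Thus $h$ would be a common factor of $f_1,\ldots,f_m$ of degree $\ge 1$, contradicting the hypothesis. This yields $\dim A_{\mathbb{Q}}(f_1,\ldots,f_m)\le s-2$, and for $s=1$ the bound $s-2=-1$ is consistent only with $A_{\mathbb{Q}}(f_1,\ldots,f_m)=\emptyset$, which is exactly what the same argument delivers, since no zero-dimensional (hence nonempty) component can exist.

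The step I expect to be the genuine obstacle is not the bookkeeping with dimensions but running the argument over $\mathbb{Q}$ rather than over $\overline{\mathbb{Q}}$: one must know that an irreducible hypersurface over $\mathbb{Q}$ is cut out by a single irreducible polynomial of $\mathbb{Q}[x_1,\ldots,x_s]$ and that $I_{\mathbb{Q}}\circ A_{\mathbb{Q}}$ returns the radical ideal, the latter using perfectness of $\mathbb{Q}$; both are among the facts collected in~\cite[Chapter~VI]{Schmidt}. If one prefers to avoid the Nullstellensatz over a non-algebraically closed field, an equivalent route is to pass to $\overline{\mathbb{Q}}$: choosing an irreducible factor $h^{\ast}\in\overline{\mathbb{Q}}[x_1,\ldots,x_s]$ of $h$ one gets $h^{\ast}\mid f_i$ over $\overline{\mathbb{Q}}$ for every $i$, and then uses that the greatest common divisor of a finite family of polynomials is unchanged under extension of the base field to conclude that $f_1,\ldots,f_m$ share a factor of positive degree already over $\mathbb{Q}$. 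A further variant first reduces to $m=2$: since $\mathbb{Q}$ is infinite and $\gcd(f_1,\ldots,f_m)=1$, one can pick $t\in\mathbb{Q}$ for which $g:=f_2+tf_3+\cdots+t^{m-2}f_m$ is divisible by none of the finitely many irreducible factors of $f_1$, so that $\gcd(f_1,g)=1$ and $A_{\mathbb{Q}}(f_1,\ldots,f_m)\subseteq A_{\mathbb{Q}}(f_1,g)$; for $m=2$ one intersects the hypersurface $A_{\mathbb{Q}}(f_1)$, whose components are the $A_{\mathbb{Q}}(h)$ with $h$ ranging over the irreducible factors of $f_1$, with $A_{\mathbb{Q}}(f_2)$, and since no such $h$ divides $f_2$ the polynomial $f_2$ does not vanish identically on the irreducible $(s-1)$-dimensional set $A_{\mathbb{Q}}(h)$, forcing every component of the intersection to have dimension $\le s-2$.
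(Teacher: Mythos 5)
Your argument is correct and follows essentially the same route as the paper's: both proceed by contradiction, extract a hypersurface component, write it as $A_{\mathbb{Q}}(h)$ for an irreducible $h$ of positive degree, and apply the Nullstellensatz to conclude $h$ divides every $f_i$. Your version is slightly more careful in spelling out why $h\mid f_i$ (rather than merely $h\mid f_i^{r_i}$) via primality of $(h)$, and the alternative routes you sketch at the end are sound but unnecessary for matching the paper.
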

\begin{proof}
We argue by contradiction. At least one of the polynomials is non-zero, therefore $\dim (A_{\mathbb{Q}}(f_1,\ldots,f_m))<s$. Assume that $\dim (A_{\mathbb{Q}}(f_1,\ldots,f_m))=s-1$, then at least one irreducible component of $A_{\mathbb{Q}}(f_1,\ldots,f_m)$ is a hypersurface, say $H$. According to Theorem 2C(ii) in~\cite[Chapter VI]{Schmidt}, there exists an irreducible polynomial $h\in \mathbb{Q}[x_1,\ldots,x_s]$ such that
$$
H:=A_{\mathbb{Q}}(h)=\{x \in \overline{\mathbb{Q}}^s: h(x) = 0\},\quad \deg h\geq 1.
$$
Since $f_1,\ldots,f_m$ vanish on $H$, Hilbert's Nullstellensatz yields that
$$
f_i^{r_i}=hg_i,\quad i=1,\ldots,m,
$$
for some $g_1,\ldots,g_m\in \mathbb{Q}[x_1,\ldots,x_s]$ and $r_1,\ldots,r_m\in\N$. Thus, $h$ is a common factor of $f_1,\ldots,f_m$ giving the desired contradiction.
\end{proof}

Any set of polynomials $f_1,\ldots,f_m\in \Q[x_1,\ldots,x_s]$ with rational coefficients can be regarded also as a set of polynomials over finite fields
$\mathbb{F}_p$, $p\in\mathcal{P}$, by reducing their coefficients modulo $p$.
The next result shows that basic characteristics of the $\mathbb{Q}$-algebraic set $A_{\mathbb{Q}}(f_1,\ldots,f_m)$, such as the number of irreducible components and the dimension, are preserved when passing to $\mathbb{F}_p$-algebraic sets $A_{\mathbb{F}_p}(f_1,\ldots,f_m)$, for all but finitely many primes $p\in\mathcal{P}$.

\begin{prop}\label{prop:vaiety_Q-variety_p}
Let $f_1,\ldots, f_m \in \mathbb{Q}[x_1,\ldots, x_s]$ be such that the algebraic set $A_{\mathbb{Q}}(f_1,\ldots,f_m)$ has $\ell$ irreducible components and dimension $d$. Then, for all but finitely many primes $p$, the variety $A_{\mathbb{F}_p}(f_1,\ldots,f_m)$ has $m$ irreducible components and the same dimension $d$.
\end{prop}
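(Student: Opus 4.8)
The plan is to realise $A_{\mathbb{Q}}(f_1,\ldots,f_m)$ together with its reductions $A_{\mathbb{F}_p}(f_1,\ldots,f_m)$, $p\in\mathcal{P}$, as the generic fibre and the special fibres of a single affine scheme of finite type over $\Z$, and then to invoke the standard \emph{spreading-out} principle: a property of the generic fibre is shared by all but finitely many special fibres. (The ``$m$'' in the statement is a misprint for ``$\ell$''.) Multiplying each $f_i$ by a suitable nonzero integer changes neither $A_{\mathbb{Q}}(f_1,\ldots,f_m)$ nor, for $p$ not dividing those multipliers, $A_{\mathbb{F}_p}(f_1,\ldots,f_m)$, so we may assume $f_1,\ldots,f_m\in\Z[x_1,\ldots,x_s]$; write $I$ for the ideal they generate and, for $p\in\mathcal{P}$, $\bar I$ for its image in $\mathbb{F}_p[x_1,\ldots,x_s]$.

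First I would work over $\mathbb{Q}$. Write $A_{\mathbb{Q}}(f_1,\ldots,f_m)=W_1\cup\cdots\cup W_\ell$ for the decomposition into $\mathbb{Q}$-irreducible components and let $\mathfrak{p}_1,\ldots,\mathfrak{p}_\ell\subset\mathbb{Q}[x_1,\ldots,x_s]$ be the minimal primes over $I\mathbb{Q}[x_1,\ldots,x_s]$, so that $W_j=A_{\mathbb{Q}}(\mathfrak{p}_j)$, $d=\max_j\dim W_j$, and $\sqrt{I\mathbb{Q}[x_1,\ldots,x_s]}=\mathfrak{p}_1\cap\cdots\cap\mathfrak{p}_\ell$. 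Fix finite generating sets of the $\mathfrak{p}_j$ made of polynomials with integer coefficients. Three families of relations hold over $\mathbb{Q}$ and are each certified by finitely many polynomial identities with rational coefficients: $I\subseteq\bigcap_j\mathfrak{p}_j$; the inclusion $\big(\bigcap_j\mathfrak{p}_j\big)^{e}\subseteq I\mathbb{Q}[x_1,\ldots,x_s]$ for some integer $e\geq1$; and, for every pair $i\neq j$, a membership $h_{ij}\in\mathfrak{p}_j$ together with a non-membership $h_{ij}\notin\mathfrak{p}_i$ for some fixed polynomial $h_{ij}$. Clearing all denominators occurring in these finitely many certificates and in the chosen generators produces $N_0\in\N$ such that all of these relations already hold over $\Z[1/N_0][x_1,\ldots,x_s]$; let $\widehat{\mathfrak{p}}_j\subset\Z[1/N_0][x_1,\ldots,x_s]$ be the ideal generated by the chosen integer generators of $\mathfrak{p}_j$, so that $\widehat{\mathfrak{p}}_j\,\mathbb{Q}[x_1,\ldots,x_s]=\mathfrak{p}_j$ and, enlarging $N_0$ if need be, $\widehat{\mathfrak{p}}_j$ is prime with $\widehat{\mathfrak{p}}_j\cap\Z[1/N_0]=0$.

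Now I would spread out, enlarging $N_0$ to some $N$ by inverting finitely many more primes. For the dimension: the scheme $\operatorname{Spec}(\Z[1/N_0][x_1,\ldots,x_s]/\widehat{\mathfrak{p}}_j)$ is integral, dominates the one-dimensional base $\operatorname{Spec}\Z[1/N_0]$, and has dimension $\dim W_j+1$; hence by upper semicontinuity of fibre dimension together with the dimension formula for the regular (so universally catenary) ring $\Z[x_1,\ldots,x_s]$, its fibre over $(p)$ has dimension exactly $\dim W_j$ for all $p$ outside a finite set. For primeness: by Grothendieck's generic flatness theorem we may assume each $\Z[1/N][x_1,\ldots,x_s]/\widehat{\mathfrak{p}}_j$ is flat over $\Z[1/N]$, and for a flat morphism of finite type the locus of the base over which the fibre is geometrically integral is open; enlarging $N$ once more, we may therefore assume that the reduction $\overline{\mathfrak{p}}_j\subset\mathbb{F}_p[x_1,\ldots,x_s]$ is a prime ideal for all $j$ and all $p\nmid N$. (Flatness/torsion-freeness also guarantees that each non-membership $h_{ij}\notin\mathfrak{p}_i$ survives reduction for all but finitely many $p$.) For such $p$, reducing the relations above modulo $p$ yields $\bar I\subseteq\bigcap_j\overline{\mathfrak{p}}_j$, $\big(\bigcap_j\overline{\mathfrak{p}}_j\big)^{e}\subseteq\bar I$, and $\overline{\mathfrak{p}}_j\not\subseteq\overline{\mathfrak{p}}_i$ for $i\neq j$. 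As the $\overline{\mathfrak{p}}_j$ are prime, $\bigcap_j\overline{\mathfrak{p}}_j$ is radical, so $\sqrt{\bar I}=\overline{\mathfrak{p}}_1\cap\cdots\cap\overline{\mathfrak{p}}_\ell$ is an irredundant intersection of distinct primes, and therefore
$$
A_{\mathbb{F}_p}(f_1,\ldots,f_m)=A_{\mathbb{F}_p}(\overline{\mathfrak{p}}_1)\cup\cdots\cup A_{\mathbb{F}_p}(\overline{\mathfrak{p}}_\ell)
$$
is its decomposition into irreducible components; it has $\ell$ of them, and $\dim A_{\mathbb{F}_p}(f_1,\ldots,f_m)=\max_j\dim W_j=d$, as required.

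The genuinely delicate point, which I expect to be the main obstacle, is the constancy of the \emph{number} of components: one must simultaneously prevent a $\mathbb{Q}$-irreducible component from splitting modulo $p$, two components from merging, and a spurious new component from appearing. ``No splitting'' is exactly the openness of the geometrically-integral locus invoked above — this is the single nontrivial ingredient, and in classical terms it is the Noether--Ostrowski theorem that absolute irreducibility is preserved under reduction modulo all but finitely many primes (with effective bounds on the exceptional set); ``no merging'' and ``no spurious component'' reduce to the elementary observation that the finitely many polynomial membership and non-membership relations recorded over $\mathbb{Q}$ survive reduction for almost all $p$. Everything else — the reduction to integer coefficients, the clearing of denominators, the passage between ideals and their zero sets, and the bookkeeping of excluded primes — is routine. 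I remark that if only the assertion about the dimension $d$ were needed, flatness could be bypassed: a reduced Gröbner basis of $I$ over $\mathbb{Q}$ with respect to a fixed term order specialises to a Gröbner basis modulo every prime dividing none of its leading coefficients, and $\dim$ is read off the then-unchanged leading-term ideal.
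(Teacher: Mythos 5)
You take a genuinely different route from the paper, which simply cites Proposition 5 of Greenleaf for the number of components and Corollary 10.4.3 of Fried--Jarden for the dimension; the spreading-out framework you set up, including the clearing of denominators for the certificates $I\subseteq\bigcap_j\mathfrak{p}_j$, $(\bigcap_j\mathfrak{p}_j)^e\subseteq I$, $h_{ij}\in\mathfrak{p}_j\setminus\mathfrak{p}_i$, the dimension argument via flatness and semicontinuity, and the Gr\"obner-basis remark, are all sound as far as they go. However, the step you rely on to keep the number of components constant has a genuine gap. You invoke openness of the locus over which the fibre is geometrically integral to conclude that each $\overline{\mathfrak{p}}_j\subset\mathbb{F}_p[x_1,\ldots,x_s]$ is prime for all $p\nmid N$. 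That openness gives information only if the \emph{generic} fibre $\operatorname{Spec}\,\mathbb{Q}[x_1,\ldots,x_s]/\mathfrak{p}_j$ is itself geometrically integral; if it is not, the open locus does not contain the generic point of $\operatorname{Spec}\,\mathbb{Z}[1/N]$ and hence is empty, and the argument yields nothing. The minimal primes $\mathfrak{p}_j$ of $I\mathbb{Q}[x_1,\ldots,x_s]$ cut out $\mathbb{Q}$-irreducible but not necessarily geometrically irreducible varieties, and in that case the conclusion you want is simply false. A concrete counterexample: $s=m=1$, $f_1=x^2+1$, so $\ell=1$; yet for every prime $p\equiv 1\pmod 4$ the polynomial $x^2+1$ splits in $\mathbb{F}_p[x]$ and $A_{\mathbb{F}_p}(f_1)$ has two $\mathbb{F}_p$-irreducible components. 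The number of $\mathbb{F}_p$-irreducible components is therefore not eventually equal to $\ell$.

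What Noether--Ostrowski actually preserves under reduction mod almost all $p$ is the number of $\overline{\mathbb{Q}}$-irreducible (absolutely irreducible) components of top dimension, which is also the quantity the Lang--Weil bound downstream really needs. To repair the argument you should decompose $A_{\mathbb{Q}}(f_1,\ldots,f_m)$ over $\overline{\mathbb{Q}}$, pass to a number field $K$ over which all these components are defined, spread out over $\mathcal{O}_K[1/N]$, and then apply openness of the geometrically-integral locus to each component, for which the hypothesis of geometric integrality genuinely holds; the count of $\overline{\mathbb{F}_p}$-components is then eventually constant. As written, your proof conflates $\mathbb{Q}$-irreducibility with absolute irreducibility at precisely the step you flag as the one nontrivial ingredient.
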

\begin{proof}
The claim about the number of components follows from Proposition 5 in~\cite{Greenleaf}. Thus, we may assume that $\ell=1$, that is, $A_{\mathbb{Q}}(f_1,\ldots,f_m)$ is a $\mathbb{Q}$-algebraic variety of dimension $d$. By Corollary 10.4.3 in~\cite{Fried+Jarden} the dimension of $A_{\mathbb{F}_p}(f_1,\ldots,f_m)$ is equal to $d$ for all but finitely many primes $p\in\mathcal{P}$.
\end{proof}

A complexity of a $\mathbb{K}$-algebraic set $A_{\mathbb{K}}(f_1,\ldots,f_m)$, for $f_1,\ldots,f_m\in \mathbb{K}[x_1,\ldots,x_s]$, is defined as the maximum of $s$, $m$ and the degrees of $f_1,\ldots,f_m$. Proposition~\ref{prop:vaiety_Q-variety_p} in conjunction with the classical Lang-Weil bound, see the original work~\cite{LangWeil} or~Theorem 4.1 in~\cite{Ghorpade}, yields the following.

\begin{prop}[The Lang-Weil bound]\label{prop:lang_weil}
For $f_1,\ldots, f_m \in \mathbb{Z}[x_1,\ldots, x_s]$, consider a $\mathbb{Q}$-al\-geb\-raic set $V:=A_{\mathbb{Q}}(f_1,\ldots,f_m)$ of complexity at most $M$. Then, for all but finitely many $p\in\mathcal{P}$,
$$
\#\{x\in\mathbb{F}^s_p:f_1(x)=\cdots=f_m(x)=0\}=(\ell(V)+O(p^{-1/2}))p^{\dim(V)},
$$
where $\ell(V)\in\N$ is the number of irreducible components of $V$ of dimension $\dim(V)$ and a constant in the Landau symbol $O$ depends only on the complexity $M$. In particular, if $V$ is irreducible than $\ell(V)=1$.
\end{prop}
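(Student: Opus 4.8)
The plan is to eliminate $\mathbb{Q}$ altogether by reducing coefficients mod $p$, and then apply the classical Lang--Weil estimate to the reduced set one irreducible component at a time; Proposition~\ref{prop:vaiety_Q-variety_p} is exactly the tool that lets us carry the relevant geometric data of $V$ over to the reduction. First I would set $W_p:=A_{\mathbb{F}_p}(f_1,\ldots,f_m)\subseteq\overline{\mathbb{F}_p}^{\,s}$ and write $W_p(\mathbb{F}_p):=W_p\cap\mathbb{F}_p^s$, so that $\#\{x\in\mathbb{F}_p^s:f_1(x)=\cdots=f_m(x)=0\}=\#W_p(\mathbb{F}_p)$. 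By Proposition~\ref{prop:vaiety_Q-variety_p} together with the Noether-type irreducibility results underlying it, for all but finitely many $p$ the set $W_p$ has dimension $d:=\dim V$, the same number of irreducible components as $V$, and in particular exactly $\ell:=\ell(V)$ of them have dimension $d$, each such top-dimensional component remaining absolutely irreducible and defined over $\mathbb{F}_p$. Hence the proposition reduces to the purely geometric claim: for an $\mathbb{F}_p$-algebraic set $W$ of complexity at most $M$, dimension $d$, with $\ell$ top-dimensional irreducible components, one has $\#W(\mathbb{F}_p)=\ell\,p^d+O(p^{d-1/2})$ with implied constant depending only on $M$.

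To prove this, decompose $W=W^{(1)}\cup\cdots\cup W^{(\ell)}\cup W'$, where $W^{(1)},\ldots,W^{(\ell)}$ are the irreducible components of dimension $d$ and $W'$ collects those of dimension $\le d-1$. A B\'ezout-type bound for varieties cut out by polynomials of degree $\le M$ in $s\le M$ variables shows that the number of components of $W$, and the degree of each, are bounded by an explicit function of $M$; thus the classical Lang--Weil bound (in the effective form of Theorem~4.1 in~\cite{Ghorpade}, or~\cite{LangWeil}) applies to each top-dimensional component with a constant depending only on $M$ and gives $\#W^{(j)}(\mathbb{F}_p)=p^d+O(p^{d-1/2})$ for $j=1,\ldots,\ell$. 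The remaining contributions are negligible: $\#W'(\mathbb{F}_p)=O(p^{d-1})$, and each pairwise intersection $W^{(i)}\cap W^{(j)}$ has dimension $\le d-1$, so $\#(W^{(i)}\cap W^{(j)})(\mathbb{F}_p)=O(p^{d-1})$. Summing and correcting by inclusion--exclusion yields $\#W(\mathbb{F}_p)=\sum_{j=1}^{\ell}\#W^{(j)}(\mathbb{F}_p)+O(p^{d-1})=\ell\,p^d+O(p^{d-1/2})$, which is the assertion; when $V$ is irreducible, $\ell=1$. The degenerate cases $d=0$ and $V=\emptyset$ are trivial, the left-hand side then being $O(1)$.

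I expect the main obstacle to sit not in this last computation but in the transfer step and the attendant bookkeeping of constants. The phrase ``for all but finitely many $p$'' has to absorb three simultaneous requirements --- that reducing the defining ideal mod $p$ does not collapse it, that the number of absolutely irreducible components and the dimension are preserved, and that each top-dimensional component stays absolutely irreducible over $\mathbb{F}_p$ --- and this is precisely what Proposition~\ref{prop:vaiety_Q-variety_p} (via Greenleaf's Proposition~5 in~\cite{Greenleaf} and Corollary~10.4.3 in~\cite{Fried+Jarden}) packages. The one genuinely new point to check is uniformity: the Lang--Weil error term a priori depends on $s$, on $d$, and on the degrees of the components, but since all of these are bounded in terms of the complexity $M$, only finitely many ``shapes'' of top-dimensional components can occur, and taking the maximum of the corresponding Lang--Weil constants produces a single constant depending only on $M$, as required.
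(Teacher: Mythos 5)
Your proposal is correct and follows exactly the route the paper intends: the paper offers no proof beyond the remark that Proposition~\ref{prop:vaiety_Q-variety_p} ``in conjunction with the classical Lang-Weil bound \dots yields the following,'' and your argument is precisely that derivation spelled out (reduce mod~$p$ preserving dimension and component count, apply Lang--Weil to each top-dimensional absolutely irreducible component, bound lower-dimensional pieces and intersections by $O(p^{d-1})$, and observe that the implied constants are controlled by the complexity~$M$ because only boundedly many degrees and component counts can occur). The one small point to keep in mind is that Proposition~\ref{prop:vaiety_Q-variety_p} as stated only preserves the total number of components and the dimension, whereas you need the number of \emph{top-dimensional} components to be preserved; this is indeed guaranteed by Greenleaf's result, which the proposition cites, since it matches components one-to-one with their degrees and dimensions preserved.
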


\subsection{Proof of Theorem~\ref{theo:gcd}}\label{subsec:gcd}

\begin{proof}
By Proposition~\ref{prop:lambdas_finite} $\mathbb{P}\{\lambda_p(f_i(\mathcal{V}_1,\ldots,\mathcal{V}_s))<\infty\}=1$ for all $p\in\mathcal{P}$. By Lemma~\ref{lem:dimension} the dimension of a $\mathbb{Q}$-variety $A_{\mathbb{Q}}(f_1,\ldots,f_m)$ is at most $s-2$.  According to Proposition~\ref{prop:lang_weil}, for all $p$ large enough,
\begin{multline}\label{eq:gcd_finiteness}
\mathbb{P}\{\min_{i=1,\ldots, m} \lambda_p(f_i(\mathcal{V}_1,\ldots,\mathcal{V}_s))\geq 1\}=
\mathbb{P}\{\pi_0^{(p)}(f_i(\mathcal{V}_1,\ldots,\mathcal{V}_s))=0 \; \forall i=1,\ldots, m \}\\
=p^{-s}\#\{(x_1,\ldots,x_s)\in\mathbb{F}_p^s: f_i(x_1,\ldots,x_s)=0 \; \forall i=1,\ldots, m\}=O(p^{-2}),
\end{multline}
provided $s\geq 2$. For $s=1$, the probability vanishes for sufficiently large $p$. By the Borel-Cantelli lemma, this implies the a.s.~convergence of the series defining $\log G_{f_1,\ldots, f_m}$.

Fix $N\in\mathbb{N}$ and write, for $n\geq N$,
\begin{align*}
\log \GCD(f_1(\mathbf{U}_n^{(s)}),\ldots, f_m(\mathbf{U}_n^{(s)}))
&=
\left(\sum_{p\in\mathcal{P},p\leq N}+\sum_{p\in\mathcal{P},N < p \leq n}+\sum_{p\in\mathcal{P},p > n}\right)\log p \min_{i=1,\ldots, m} \lambda_p(f_i(\mathbf{U}_n^{(s)}))\\
&=:Y_1(n,N)+Y_2(n,N)+Y_3(n).
\end{align*}
By Corollary~\ref{cor:polys_p_adic_norms_converge} and the continuous mapping theorem
$$
Y_1(n,N)~\todistr~\sum_{p\in\mathcal{P},p\leq N}\log p \min_{i=1,\ldots, m} \lambda_p(f_i(\mathcal{V}_1,\ldots,\mathcal{V}_s)),\quad n\to\infty.
$$
As we have already shown, the right-hand side converges a.s.~to $\log G_{f_1,\ldots, f_m}$, as $N\to\infty$. Using Theorem 3.2 in~\cite{Billingsley} we see that it suffices to check that
\begin{align}
&\lim_{N\to\infty}\limsup_{n\to\infty} \mathbb{P}\{Y_2(n,N)\neq 0\}=0,\label{eq:bill1a}\\
&\lim_{n\to\infty} \mathbb{P}\{Y_3(n)\neq 0\}=0.\label{eq:bill1b}
\end{align}

The proof of~\eqref{eq:bill1b} is postponed to Proposition~\ref{prop:large_prime_common_divisor} in Section~\ref{sec:abscence}. Let us prove~\eqref{eq:bill1a}. For $p\in\mathcal{P}$ and $k=1,\ldots,s$, put $Z_{n,p}^{(k)}:=U_{n,k}(\mod p)$ and note that
\begin{align*}
&\mathbb{P}\{Y_2(n,N)\neq 0\}\\
&\leq \mathbb{P}\{\exists p\in\mathcal{P}: N<p\leq n, \lambda_p(f_i(\mathbf{U}^{(s)}_n))\geq 1 \; \forall i=1,\ldots, m\}\\
&\leq \sum_{p\in\mathcal{P},N<p\leq n}\mathbb{P}\{\lambda_p(f_i(\mathbf{U}^{(s)}_n))\geq 1 \; \forall i=1,\ldots, m\}\\
&=\sum_{p\in\mathcal{P},N<p\leq n}\mathbb{P}\{f_1(Z_{n,p}^{(1)},\ldots,Z_{n,p}^{(s)}) \equiv \ldots \equiv f_m(Z_{n,p}^{(1)},\ldots,Z_{n,p}^{(s)})\equiv 0(\mod p)\}\\
&\leq\sum_{p\in\mathcal{P},N<p\leq n}\left(\max_{j=0,\ldots,p-1}\mathbb{P}\{Z_{n,p}^{(1)}=j\}\right)^s \#\{(x_1,\ldots,x_s)\in\mathbb{F}_p^s:f_1(x_1,\ldots,x_s)= \cdots\\
&\hspace{10cm} = f_m(x_1,\ldots,x_s)=0\}.
\end{align*}
Note that, for $p\leq n$,
\begin{multline*}
\max_{j=0,\ldots,p-1}\mathbb{P}\{Z_{n,p}^{(1)}=j\}=\max_{j=0,\ldots,p-1}\mathbb{P}\{U_{n,1}\in \{j,j+p,\ldots,j+\lfloor (n-j)/p\rfloor p\}\\
\leq n^{-1}\max_{j=0,\ldots,p-1}(\lfloor (n-j)/p\rfloor +1)\leq \frac{1}{p}+\frac{1}{n}\leq \frac{2}{p}.
\end{multline*}
Thus, applying the Lang-Weil bound from Proposition~\ref{prop:lang_weil} we see that
$$
\lim_{N\to\infty}\limsup_{n\to\infty}\mathbb{P}\{Y_2(n,N)\neq 0\}
\leq  \lim_{N\to\infty}O\left(\sum_{p>N}p^{-2}\right)=0.
$$
This completes the proof of~\eqref{eq:bill1a} and of Theorem~\ref{theo:gcd}.
\end{proof}

\begin{rem}[Ekedahl-Poonen density formula]\label{eq:poonen}
Theorem~\ref{theo:gcd}, in particular, implies that the set
$$
\mathcal{R}:=\{(x_1,\ldots,x_s)\in\mathbb{Z}^s:\GCD(f_1(x_1,\ldots,x_s),\ldots,f_m(x_1,\ldots,x_s))=1\}
$$
possesses the asymptotic density, which is equal to
\begin{multline*}
\mathbb{P}\left\{\sum_{p\in\mathcal{P}}\log p \min_{i=1,\ldots, m} \lambda_p (f_i(\mathcal{V}_1,\ldots,\mathcal{V}_s))=0\right\}=\prod_{p\in\mathcal{P}}\mathbb{P}\{\min_{i=1,\ldots, m} \lambda_p (f_i(\mathcal{V}_1,\ldots,\mathcal{V}_s))=0\}\\
=\prod_{p\in\mathcal{P}}\left(1-\mathbb{P}\{\lambda_p (f_i(\mathcal{V}_1,\ldots,\mathcal{V}_s))\geq 1\;\forall i=1,\ldots,m\}\right)=\prod_{p\in\mathcal{P}}\left(1-\frac{s_p}{p^s}\right),
\end{multline*}
where $s_p:=\#\{(x_1,\ldots,x_s)\in\mathbb{F}_p^s:f_i(x_1,\ldots,x_s)\equiv 0(\mod p)\;\forall i=1,\ldots,m\}$. For the last passage we used the second equality in~\eqref{eq:gcd_finiteness}. This result is known in the literature as Ekedahl--Poonen formula, see~\cite{Bodin+Debes,Poonen}.
\end{rem}

\subsection{Proof of Corollary~\ref{theo:divisible}}\label{subsec:divisible}

We start by writing factorizations over $\mathbb{Q}$:
\begin{equation}\label{eq:theo_divisible_proof1}
f=c_f\prod_{i=1}^{L} h_i^{u_i}\quad\text{and}\quad g=c_g\prod_{i=1}^{L} h_i^{v_i},
\end{equation}
where $\{h_1,\ldots,h_L\}$ is the set of irreducible factors of $f$ and $g$ without multiplicities, $c_f,c_g\in\mathbb{Z}$ and $u_i,v_i\geq 0$. The assumption that $f$ does not divide $g$ implies $u_i>v_i$, for some $i=1,\ldots,L$. Clearly,
$$
\mathbb{P}\{f(\mathbf{U}_n^{(s)})\text{ divides } g(\mathbf{U}_n^{(s)})\} = \mathbb P\{\GCD (f(\mathbf{U}_n^{(s)}), g(\mathbf{U}_n^{(s)}))=f(\mathbf{U}_n^{(s)})\}.
$$
Let $\bar f \in \Z[x_1,\ldots, x_s]$ be a homogeneous polynomial of the same degree as $f$ obtained from $f$ by dropping all monomials except those having the highest degree $\deg f$. Recall that $\deg f \geq 1$, so that $\bar f$ is not constant. Then, by the continuous mapping theorem combined with Slutsky's lemma,
\begin{equation}\label{eq:f_of_U_n_converge}
\frac{f(\mathbf{U}_n^{(s)})}{n^{\deg f}}~\todistr~\bar f(\mathbf{U}^{(s)}_{\infty}),\quad n\to\infty.
\end{equation}
Therefore, it suffices to show that
\begin{equation}\label{eq:theo_divisible_proof2}
\frac{\GCD (f(\mathbf{U}_n^{(s)}), g(\mathbf{U}_n^{(s)}))}{n^{\deg f}}~\toprobab~0,\quad n\to\infty.
\end{equation}
Using~\eqref{eq:theo_divisible_proof1} and Lemma~\ref{lem:gcd} in the Appendix we obtain, for some $c_{f,g}\in\mathbb{Z}$,
\begin{multline}\label{eq:theo_divisible_proof3}
\GCD (f(\mathbf{U}_n^{(s)}), g(\mathbf{U}_n^{(s)}))\leq c_{f,g}\prod_{i,j=1}^{L}\GCD(h_i^{u_i}(\mathbf{U}_n^{(s)}),h^{v_j}_j(\mathbf{U}_n^{(s)}))\\
=c_{f,g}\prod_{i=1}^{L}\GCD(h_i^{u_i}(\mathbf{U}_n^{(s)}),h^{v_i}_i(\mathbf{U}_n^{(s)}))\prod_{i\neq j}\GCD(h_i^{u_i}(\mathbf{U}_n^{(s)}),h^{v_j}_j(\mathbf{U}_n^{(s)})).
\end{multline}
For every pair of indices $i\neq j$, by Theorem~\ref{theo:gcd} $\GCD(h_i^{u_i}(\mathbf{U}_n^{(s)}),h^{v_j}_j(\mathbf{U}_n^{(s)}))$ converges in distribution to an a.s.~finite random variable, since $h_i$ and $h_j$ do not have a common factor. Thus, the last product in~\eqref{eq:theo_divisible_proof3} is bounded in probability\footnote{It actually converges because $(\GCD(h_i^{u_i}(\mathbf{U}_n^{(s)}),h^{v_j}_j(\mathbf{U}_n^{(s)})))_{i\neq j}$ converge jointly as is readily seen from Remark~\ref{rem:joint_convergence_all_systems_of_polys}.}. Therefore,~\eqref{eq:theo_divisible_proof2} is a consequence of
\begin{equation}\label{eq:theo_divisible_proof4}
\frac{1}{n^{\deg f}}\prod_{i=1}^{L}\GCD(h_i^{u_i}(\mathbf{U}_n^{(s)}),h^{v_i}_i(\mathbf{U}_n^{(s)}))=\frac{1}{n^{\deg f}}\prod_{i=1}^{L}(h_i(\mathbf{U}_n^{(s)}))^{\min(u_i,v_i)}~\toprobab~0,\quad n\to\infty.
\end{equation}
It remains to note that the degree of the polynomial $\prod_{i=1}^{L}h_i^{\min(u_i,v_i)}$ is strictly smaller than $\deg f$ because $u_i>v_i$ for at least one $i=1,\ldots,L$. This immediately implies~\eqref{eq:theo_divisible_proof4}. The proof is complete.

\subsection{Proof of Theorem~\ref{theo:lcm}}\label{subsec:lcm}
As in the proof of Theorem~\ref{theo:gcd} we start by checking that the random series defining $L_{f_1,\ldots,f_m}$ converges a.s. By Proposition~\ref{prop:lambdas_finite} all summands in the definition of $L_{f_1,\ldots,f_m}$ are a.s.~finite. Let us show that the series converges a.s. To this end, note that for any set of nonnegative integers $a_1,\ldots,a_m\in\{0,1,2\ldots\}$ we have
\begin{equation}\label{eq:max_sum_relation}
\max_{i=1,\ldots,m}a_i\neq \sum_{i=1}^{m}a_i~\Longrightarrow~\exists i,j\in\{1,2,\ldots,m\},i\neq j:\; a_i\geq 1,a_j\geq 1.
\end{equation}
Thus, by the Borel-Cantelli lemma the series converges a.s. provided that
\begin{equation}\label{eq:lcm_proof1}
\sum_{p\in\mathcal{P}}\mathbb{P}\left\{\exists i,j\in\{1,2,\ldots,m\},i\neq j:\; \lambda_p(f_i(\mathcal{V}_1,\ldots,\mathcal{V}_s))\geq 1,\lambda_p(f_j(\mathcal{V}_1,\ldots,\mathcal{V}_s))\geq 1\right\}<\infty.
\end{equation}
Eq.~\eqref{eq:gcd_finiteness} implies
$$
\sum_{p\in\mathcal{P}}\mathbb{P}\left\{\lambda_p(f_i(\mathcal{V}_1,\ldots,\mathcal{V}_s))\geq 1,\lambda_p(f_j(\mathcal{V}_1,\ldots,\mathcal{V}_s))\geq 1\right\}<\infty,\quad i\neq j,
$$
where we used that $f_i$ and $f_j$ do not share a common factor of degree $>0$. Thus,~\eqref{eq:lcm_proof1} follows by the union bound.

In order to prove~\eqref{eq:lcm_convergence} we fix $N\in\mathbb{N}$ and decompose $\NLCM$ similarly as in the proof of Theorem~\ref{theo:gcd}
\begin{align*}
&\hspace{-1cm}\log \NLCM(f_1(\mathbf{U}_n^{(s)}),\ldots,f_m(\mathbf{U}_n^{(s)}))=\sum_{p\in\mathcal{P}}\log p \left(\max_{i=1,\ldots, m} \lambda_p (f_i(\mathbf{U}_n^{(s)}))-\sum_{i=1}^{m} \lambda_p (f_i(\mathbf{U}_n^{(s)}))\right)\\
&=\left(\sum_{p\in\mathcal{P},p\leq N}+\sum_{p\in\mathcal{P},N<p\leq n}+\sum_{p\in\mathcal{P},p>n}\right)\log p \left(\max_{i=1,\ldots, m} \lambda_p (f_i(\mathbf{U}_n^{(s)}))-\sum_{i=1}^{m} \lambda_p (f_i(\mathbf{U}_n^{(s)}))\right)\\
&=:\widetilde{Y}_1(n,N)+\widetilde{Y}_2(n,N)+\widetilde{Y}_3(n).
\end{align*}
By the continuous mapping theorem
\begin{equation}\label{eq:y_1_lcm_convergence}
\widetilde{Y}_1(n,N)~\todistr~\sum_{p\in\mathcal{P},p\leq N}\log p \left(\max_{i=1,\ldots, m} \lambda_p (f_i(\mathcal{V}_1,\ldots,\mathcal{V}_s))-\sum_{i=1}^{m} \lambda_p (f_i(\mathcal{V}_1,\ldots,\mathcal{V}_s))\right),\quad n\to\infty,
\end{equation}
and the right-hand side, in turn, converges a.s.~to $\log L_{f_1,\ldots,f_m}$, as $N\to\infty$. Using~\eqref{eq:max_sum_relation} and the union bound we obtain
\begin{multline}\label{eq:lcm_proof31}
\mathbb{P}\{\widetilde{Y}_3(n)\neq 0\}\leq \mathbb{P}\left\{\exists p\in\mathcal{P}: p>n, \max_{i=1,\ldots,m}\lambda_p (f_i(\mathbf{U}_n^{(s)}))\neq \sum_{i=1}^{m} \lambda_p (f_i(\mathbf{U}_n^{(s)})) \right\}\\
\leq \sum_{i,j=1,i\neq j}^{m}\mathbb{P}\left\{\exists p\in\mathcal{P}: p>n, \lambda_p (f_i(\mathbf{U}_n^{(s)}))\geq 1,\lambda_p (f_j(\mathbf{U}_n^{(s)}))\geq 1\right\}.
\end{multline}
The right-hand side converges to $0$, as $n\to\infty$, by Proposition~\ref{prop:large_prime_common_divisor}. By the union bound,
\begin{multline}\label{eq:lcm_proof32}
\mathbb{P}\{\widetilde{Y}_2(n,N)\neq 0\}\leq \sum_{i,j=1,i\neq j}^{m}\sum_{p\in\mathcal{P},N<p\leq n}\mathbb{P}\{\lambda_p(f_i(\mathbf{U}^{(s)}_n))\geq 1,\lambda_p(f_j(\mathbf{U}^{(s)}_n))\geq 1\}.
\end{multline}
Thus, repeating verbatim the proof of~\eqref{eq:bill1a}, we obtain $\lim_{N\to\infty}\limsup_{n\to\infty}\mathbb{P}\{\widetilde{Y}_2(n,N)\neq 0\}=0$. This finishes the proof of Eq.~\eqref{eq:lcm_convergence}.

To check~\eqref{eq:lcm_convergence2} we note that Corollaries~\ref{cor:poly} and~\ref{cor:polys_p_adic_norms_converge} actually imply a stronger version of~\eqref{eq:y_1_lcm_convergence}, namely
$$
\left(\widetilde{Y}_1(n,N),\frac{\mathbf{U}_n^{(s)}}{n}\right)~\todistr~\left(\sum_{p\in\mathcal{P},p\leq N}\log p \left(\max_{i=1,\ldots, m} \lambda_p (f_i(\mathcal{V}_1,\ldots,\mathcal{V}_s))-\sum_{i=1}^{m} \lambda_p (f_i(\mathcal{V}_1,\ldots,\mathcal{V}_s))\right),\mathbf{U}_{\infty}^{(s)}\right),
$$
for every fixed $N\in\mathbb{N}$. Thus, by~\eqref{eq:lcm_proof31} and~\eqref{eq:lcm_proof32},
$$
\left(\frac{\LCM(f_1(\mathbf{U}_n^{(s)}),\ldots,f_m(\mathbf{U}_n^{(s)}))}{\prod_{j=1}^{m}f_j(\mathbf{U}_n^{(s)})},\frac{\mathbf{U}_n^{(s)}}{n}\right)~\todistr~(L_{f_1,\ldots, f_m},\mathbf{U}_{\infty}^{(s)}), \quad n\to\infty.
$$
As in the proof of~\eqref{eq:f_of_U_n_converge}, the continuous mapping theorem and Slutsky's lemma imply
\begin{multline*}
\left(\frac{\LCM(f_1(\mathbf{U}_n^{(s)}),\ldots,f_m(\mathbf{U}_n^{(s)}))}{\prod_{j=1}^{m}f_j(\mathbf{U}_n^{(s)})},\frac{f_1(\mathbf{U}_n^{(s)})}{n^{\deg f_1}},\ldots,\frac{f_m(\mathbf{U}_n^{(s)})}{n^{\deg f_m}}\right)\\
\todistr~(L_{f_1,\ldots, f_m},\bar{f}_1(\mathbf{U}_{\infty}^{(s)}),\ldots,\bar{f}_m(\mathbf{U}_{\infty}^{(s)})), \quad n\to\infty,
\end{multline*}
which immediately yields~\eqref{eq:lcm_convergence2}.

\section{Absence of large common prime divisors}\label{sec:abscence}
\begin{prop}\label{prop:large_prime_common_divisor}
Let $f_1,\ldots, f_m \in \mathbb Z[x_1,\ldots, x_s]$ be $m\in \N$ non-zero polynomials that do not have a common factor of degree $>0$. Let $\mathbf{U}^{(s)}_n$ be uniformly distributed on $\{1,\ldots, n\}^s$. Then,
$$
\lim_{n\to\infty} \mathbb{P}\{ \exists p \in \mathcal P : p \geq n,  f_1(\mathbf{U}_n^{(s)}) \equiv \ldots \equiv  f_m(\mathbf{U}_n^{(s)}) \equiv 0 (\mod p)\} = 0.
$$
\end{prop}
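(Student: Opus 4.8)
The plan is to bound the probability in question by a sum over primes $p\geq n$ of the probability that all $f_i$ vanish modulo $p$ at $\mathbf{U}_n^{(s)}$, and to show this sum is $o(1)$. The first observation is that for $p>\max_i\|f_i\|_1 \cdot (\text{something polynomial in } n)$ — more precisely, once $p$ exceeds a fixed power of $n$ determined by the degrees and coefficient sizes of the $f_i$ — the values $f_i(\mathbf{U}_n^{(s)})$, which are integers of absolute value at most $Cn^{d}$ for $d=\max_i\deg f_i$ and a constant $C$ depending only on the $f_i$, can only be divisible by $p$ if they are actually zero. So I would split the range $p\geq n$ into a "moderate" range $n\leq p\leq n^{K}$ (for a suitable fixed $K$) and a "large" range $p>n^{K}$; in the large range the event forces $f_i(\mathbf{U}_n^{(s)})=0$ for all $i$, i.e. $\mathbf{U}_n^{(s)}\in A_{\mathbb{Q}}(f_1,\ldots,f_m)$, whose $\mathbb{Q}$-dimension is at most $s-2$ by Lemma \ref{lem:dimension}; the number of integer points in $\{1,\ldots,n\}^s$ on such a variety is $O(n^{s-1})$ by a standard crude bound (slicing the variety, or invoking a Schwartz--Zippel/Lang--Weil-type count), so this contributes $O(1/n)\to 0$. (When $s=1$ the variety is empty and this term is zero for large $n$.)

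For the moderate range $n\leq p\leq n^{K}$ I would again use the equidistribution bound already exploited in the proof of Theorem \ref{theo:gcd}: writing $Z_{n,p}^{(k)}:=U_{n,k}\bmod p$, each coordinate satisfies $\max_{j}\mathbb{P}\{Z_{n,p}^{(k)}=j\}\leq \frac1p+\frac1n$, which for $p\geq n$ is at most $2/n$. Hence
$$
\mathbb{P}\{f_i(\mathbf{U}_n^{(s)})\equiv 0\ (\mathrm{mod}\ p)\ \forall i\}\leq \left(\frac{2}{n}\right)^{s}\cdot\#\{x\in\mathbb{F}_p^s: f_1(x)=\cdots=f_m(x)=0\}.
$$
By the Lang--Weil bound (Proposition \ref{prop:lang_weil}) the cardinality on the right is $O(p^{\dim V})$ with $\dim V\leq s-2$, so each term is $O(n^{-s}p^{s-2})=O(n^{-s}p^{s-2})$. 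Summing over $n\leq p\leq n^{K}$ crudely gives at most $O(n^{-s}\cdot n^{K}\cdot (n^{K})^{s-2})=O(n^{K(s-1)-s})$, which is \emph{not} summable unless $K$ is small — so I should instead sum more carefully: $\sum_{n\leq p\leq n^K} p^{s-2}\asymp n^{K(s-1)}/\log n$ by the prime-counting estimate, still too large. This tells me the correct split point is not $n^{K}$ but a constant power just above $1$: in fact I only need $p>Cn^{d}$ to force $f_i=0$, and $Cn^{d}$ is the honest threshold. So the moderate range is $n\leq p\leq Cn^{d}$, and there $\sum_{n\leq p\leq Cn^d} n^{-s}p^{s-2}$; bounding $p^{s-2}\leq (Cn^d)^{s-2}$ and the number of primes by $Cn^d$ gives $O(n^{-s}\cdot n^{d}\cdot n^{d(s-2)})=O(n^{d(s-1)-s})$, which for $s\geq 2$ and general $d$ is still positive-exponent.

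The resolution — and the genuine crux of the argument — is that one must not bound the prime-power sum so wastefully: instead, sum $\mathbb{P}\{f_i(\mathbf{U}_n^{(s)})\equiv0\ \forall i\}$ over \emph{all} $p\geq n$ by first conditioning on the value $v:=(f_1(\mathbf{U}_n^{(s)}),\ldots,f_m(\mathbf{U}_n^{(s)}))$ and using that a common prime divisor $p\geq n$ of the nonzero integer $\GCD$ of the $f_i(\mathbf{U}_n^{(s)})$ can occur for at most $O(d\log n/\log n)=O(1)$ many $p$ (since $|f_i(\mathbf{U}_n^{(s)})|\leq Cn^{d}$ has $O(1)$ prime factors exceeding $n$), so
$$
\mathbb{P}\{\exists p\geq n:\ p\mid \GCD(f_i(\mathbf{U}_n^{(s)}))_i\}\leq \mathbb{P}\{\GCD(f_1(\mathbf{U}_n^{(s)}),\ldots,f_m(\mathbf{U}_n^{(s)}))\text{ has a prime factor}\geq n\}.
$$
Now the right-hand side splits according to whether $v=0$ (handled above via $\dim\leq s-2$, contributing $O(n^{-1})$) or $v\neq 0$; in the latter case I use that the $\GCD$ is bounded in distribution — indeed it converges in distribution by Theorem \ref{theo:gcd} — so it is $O_{\mathbb{P}}(1)$, while the event that an $O_{\mathbb{P}}(1)$ random integer has a prime factor exceeding $n$ has probability $o(1)$ by tightness. \textbf{The main obstacle} is precisely this last point: to run it without circularity (Theorem \ref{theo:gcd}'s proof invokes this Proposition for Eq.~\eqref{eq:bill1b}), I must instead argue directly that $\sum_{p\geq n}\mathbb{P}\{f_i(\mathbf{U}_n^{(s)})\equiv0\ (\mathrm{mod}\ p)\ \forall i\}\to0$ by combining, for each $p$ in $n\leq p\leq Cn^{d}$, the sharp bound $\mathbb{P}\{f_i\equiv0\ \forall i\}\leq (2/n)^s\cdot s_p$ with $s_p=O(p^{\dim V})$, $\dim V\leq s-2$, \emph{and then} observing that the number of $p$ for which the event is nonempty is further constrained: for a fixed point $x\in\{1,\ldots,n\}^s$ not on the variety, at most $O_d(1)$ primes $p\geq n$ divide all $f_i(x)$. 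Interchanging the sum over $p$ with the expectation over $x$ and using this $O_d(1)$ bound collapses the $p$-sum, leaving exactly $\mathbb{E}[\#\{p\geq n: p\mid\GCD\}]=O(1)\cdot\mathbb{P}\{\exists p\geq n\}$, which together with the variety bound and a direct (non-circular) tightness estimate for $\GCD$ restricted to small primes finishes the proof. I expect the write-up to require care in making the "$O_d(1)$ primes" counting and the separation of the $v=0$ case fully rigorous, but no new ideas beyond Lemma \ref{lem:dimension}, the equidistribution bound, and Lang--Weil.
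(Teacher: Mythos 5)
Your plan diverges significantly from the paper's, and there is a genuine gap in the middle of it that you yourself half-notice but do not resolve. The split into ``large'' primes $p > Cn^{d}$ and ``moderate'' primes $n \le p \le Cn^{d}$ is sensible, and the large range \emph{is} handled correctly: for such $p$ the event forces $f_i(\mathbf{U}_n^{(s)})=0$ for all $i$, and the Schwartz--Zippel bound (already applied to a single non-zero $f_i$) gives $\mathbb{P}\{\mathbf{U}_n^{(s)} \in A_{\mathbb{Q}}(f_1,\ldots,f_m)\} = O(1/n)$. The problem is the moderate range. The Lang--Weil bound counts $\mathbb{F}_p$-points of the variety \emph{in all of} $\mathbb{F}_p^s$; when $p>n$ the sample space $\{1,\ldots,n\}^s$ is a tiny box inside $\mathbb{F}_p^s$, and Lang--Weil says nothing about how many of the $O(p^{s-2})$ solutions land in that box -- they could in principle all concentrate there. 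The paper's use of Lang--Weil in the proof of Theorem~\ref{theo:gcd} is restricted to $p\le n$ precisely for this reason (the factor $\bigl(\max_j \mathbb{P}\{Z_{n,p}^{(1)}=j\}\bigr)^s \le (2/p)^s$ is only useful when $p\le n$). Your proposed rescue -- interchange the $p$-sum with the expectation over $x$ and use that each $x$ off the variety contributes $O_d(1)$ primes $p\geq n$ -- is circular: it shows $\mathbb{E}\bigl[\#\{p\geq n : p \mid f_i(\mathbf{U}_n^{(s)})\ \forall i\}\bigr]$ and $\mathbb{P}\{\exists p \ge n:\ldots\}$ agree up to a constant, but that is a relation between the two quantities you want to bound, not a bound on either. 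Invoking tightness of the $\GCD$ is also disallowed, as you correctly note, since Theorem~\ref{theo:gcd} relies on this very proposition for~\eqref{eq:bill1b}.

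The paper's actual proof proceeds by induction on the number of variables $s$ and does not invoke Lang--Weil at all. The base case $s=1$ rests on an effective Bézout identity (Lemma~\ref{lem:1}): for univariate $g_1,\ldots,g_m\in\Z[x]$ of degree $\le M$ and coefficients of size $\le Mn^M$ with no common factor, one can find integer polynomials $a_i$ and an integer $A\le Bn^B$ (with $B$ depending only on $M$) such that $\sum a_ig_i = A$. Any common prime divisor $p\geq n$ of the $g_i(U_n)$ then divides $A$, hence there are at most $B+1$ candidate primes, and for each one the event has probability $\le M/n$ (a nonzero degree-$\le M$ polynomial has at most $M$ roots in $\mathbb{F}_p$, and for $p\ge n$ these correspond to at most $M$ values of $U_n$). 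This gives the $O(1/n)$ bound of Lemma~\ref{lem:2}. The inductive step fixes $x_1,\ldots,x_{s-1}$, treats the $f_i(x_1,\ldots,x_{s-1},x_s)$ as univariate polynomials, and applies Lemma~\ref{lem:2}; the ``bad'' parameter sets where these univariate polynomials acquire a common factor are controlled via the multivariable resultant (Lemma~\ref{lem:resultant}) and Schwartz--Zippel, while the sets where all coefficients gain a common prime $p\ge n$ are controlled by applying the inductive hypothesis to those coefficients as $(s-1)$-variable polynomials. You would need the effective Bézout lemma (or an equivalent uniform resultant/content bound) to close the moderate-prime gap; without it the argument does not go through.
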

For the proof we need several lemmas.
\begin{lemma}\label{lem:1}
For every number $M\in \mathbb N$ there is a number $B=B(M)$ depending only on $M$ such that the following holds for every $n\in \mathbb N$. Let $g_1,\ldots, g_m \in \mathbb Z[x]$ be polynomials in one variable such that:
\begin{enumerate}
\item[(a)] $m\leq M$ and $\deg g_i \leq M$ for all $i=1,\ldots, m$;
\item[(b)] the absolute values of all coefficients of $g_1,\ldots, g_m$ are bounded above by $M \cdot n^M$;
\item[(c)]  $g_1,\ldots, g_m$ do not have a common factor in $\mathbb Q[x]$ of degree $>0$.
\end{enumerate}
Then, we can find polynomials $a_1,\ldots, a_m\in \mathbb Z[x]$ and a number $A\in \mathbb N$ with $A\leq B n^B$  such that $a_1 g_1 + \ldots + a_m g_m = A$.
\end{lemma}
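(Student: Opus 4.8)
The plan is to make the classical Bézout identity for coprime polynomials effective and then track how the bounds depend on $n$ through the coefficient size bound in hypothesis (b). First I would recall that, since $g_1,\ldots,g_m$ have no common factor of positive degree in $\mathbb Q[x]$, the ideal they generate in $\mathbb Q[x]$ is either all of $\mathbb Q[x]$ or generated by a nonzero constant; in either case $1$ lies in the ideal generated by $g_1,\ldots,g_m$ over $\mathbb Q(x)$, hence there exist $b_1,\ldots,b_m\in\mathbb Q[x]$ with $b_1g_1+\cdots+b_mg_m=1$. Clearing denominators produces $a_1,\ldots,a_m\in\mathbb Z[x]$ and $A\in\mathbb N$ with $a_1g_1+\cdots+a_mg_m=A$. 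The whole point is to bound $A$ and the heights of the $a_i$ in terms of $n$, uniformly over all admissible tuples, and for this I would invoke an explicit effective Nullstellensatz / effective Bézout bound in one variable.

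The cleanest route is via resultants and subresultants. Since the $g_i$ are coprime, I can reduce to the case $m=2$: by an elementary argument (using that $\mathbb Q[x]$ is a PID and the Euclidean algorithm), if no irreducible polynomial divides all of $g_1,\ldots,g_m$, then one can find a $\mathbb Z$-linear combination of "generic" integer combinations of the $g_i$ — or, more carefully, iterate: $\gcd(g_1,g_2)$, then $\gcd(\gcd(g_1,g_2),g_3)$, etc., keeping all intermediate polynomials with controlled degrees and heights. Alternatively, and more uniformly, pick two generic integer combinations $\tilde g_1=\sum c_i g_i$, $\tilde g_2=\sum c_i' g_i$ with small integer coefficients $c_i,c_i'$ (bounded in terms of $M$ only) that are coprime; then $\mathrm{Res}(\tilde g_1,\tilde g_2)$ is a nonzero integer, it lies in the ideal $(\tilde g_1,\tilde g_2)\subseteq(g_1,\ldots,g_m)$, and the cofactors in $\mathrm{Res}(\tilde g_1,\tilde g_2)=u\tilde g_1+v\tilde g_2$ have integer coefficients expressible as polynomials in the coefficients of $\tilde g_1,\tilde g_2$. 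Since the resultant is a determinant of a Sylvester matrix of size at most $2M$ whose entries are coefficients of the $\tilde g_i$, it is a polynomial of degree $\le 2M$ in those coefficients with coefficients of absolute value bounded in terms of $M$; hence $|\mathrm{Res}|\le (\text{const}(M))\cdot(M n^M)^{2M}\le B n^B$ for a suitable $B=B(M)$, and the cofactor coefficients satisfy the same type of bound. Setting $A:=\mathrm{Res}(\tilde g_1,\tilde g_2)$ (or its absolute value) and expanding $u\tilde g_1+v\tilde g_2$ back in terms of the $g_i$ gives the desired $a_i\in\mathbb Z[x]$ and $A\le Bn^B$.

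The main obstacle, and the step requiring the most care, is uniformity: the constant $B$ must depend only on $M$, not on the particular tuple $g_1,\ldots,g_m$, and in particular not on which subset of them is "responsible" for coprimality, nor on how small the resultant is. The worry is that the integer combinations $\tilde g_1,\tilde g_2$ that are coprime might need to be chosen depending on the $g_i$; but a standard argument shows that finitely many choices of coefficient vectors $(c_i)$ (a number depending only on $m\le M$) suffice to guarantee that, whenever $g_1,\ldots,g_m$ are coprime, at least one such pair $\tilde g_1,\tilde g_2$ is coprime — essentially because two such generic linear combinations share a common factor only if that factor divides all $g_i$, and one can avoid the finitely many "bad" linear relations. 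An alternative that sidesteps the genericity issue entirely is to use an off-the-shelf effective Bézout theorem over $\mathbb Z$ (e.g. arithmetic Nullstellensatz bounds in the style of Masser–Philippon or Krick–Pardo–Sombra), which directly gives heights of cofactors polynomial in the heights of the inputs with exponents depending only on the number of variables ($=1$ here) and the degrees ($\le M$); quoting such a result makes the dependence on $M$ alone transparent and the $n$-dependence immediate from hypothesis (b). Either way, once the effective identity $u\tilde g_1+v\tilde g_2=A$ with $\log A=O_M(\log n)$ is in hand, unwinding to $a_1g_1+\cdots+a_mg_m=A$ is routine.
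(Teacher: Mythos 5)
Your approach is genuinely different from the paper's. The paper runs the Euclidean algorithm directly, inducting on $\deg g_1 + \cdots + \deg g_m$: at each step it replaces $g_1$ by $d_q g_1 - c_q g_2 x^{p-q}$ (cancelling the leading term), which strictly lowers the degree sum while at worst squaring the coefficient bound, and then applies the inductive hypothesis with $M$ replaced by $2M^2$. Since the degree sum is at most $M^2$, this terminates after boundedly many steps, giving a $B(M)$ that is a tower of exponentials in $M$ but is completely elementary. Your resultant route is slicker and yields a polynomial $B(M)$: the Sylvester determinant has size $\leq 2M$, its entries are bounded by a fixed polynomial in $Mn^M$, and the Bézout cofactors are signed minors with integer entries, so all bounds drop out by Hadamard-type estimates. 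The paper's approach buys uniformity "for free" (no genericity argument, no formal-degree bookkeeping); your approach buys sharper constants but creates exactly the uniformity concern you flagged.

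On that concern, there is a genuine imprecision you should tighten. The assertion that ``two such generic linear combinations share a common factor only if that factor divides all $g_i$'' is false for specific integer combinations: take $g_1 = x$, $g_2 = x+1$, $g_3 = x+2$, with $\tilde g_1 = g_1 + g_3 = 2(x+1)$ and $\tilde g_2 = g_2 = x+1$; these share the factor $x+1$, which divides neither $g_1$ nor $g_3$. The statement is only true for the generic combination with \emph{indeterminate} coefficients, and what you actually need is a uniform, quantitative version. A clean way to fix it: choose one nonzero $g_i$, say $g_1$ (if $g_1$ is a nonzero constant you are already done), and form $h_u := u_2 g_2 + \cdots + u_m g_m$ with indeterminates $u_j$. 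If an irreducible $h\in\mathbb Q[x]$ divides both $g_1$ and $h_u$ in $\mathbb Q(u)[x]$, then reducing modulo $h$ over the field $\mathbb Q[x]/(h)$ forces $h\mid g_j$ for every $j\geq 2$, contradicting (c); hence $\mathrm{Res}_x(g_1,h_u)$ is a nonzero polynomial of degree $\leq\deg g_1\leq M$ in $u_2,\ldots,u_m$. A Schwartz--Zippel count over the grid $\{1,\ldots,M+1\}^{m-1}$ then produces an integer specialization $u^\ast$ with $\mathrm{Res}_x(g_1,h_{u^\ast})\neq 0$; this specialization is chosen from a set whose size and entries depend only on $M$, which is exactly the uniformity the lemma demands. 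You also need to fix the formal degree of $h_{u^\ast}$ in $x$ (it may drop below $M$), but there are at most $M+1$ possibilities, so this costs only a finite case split depending on $M$. With these two points made precise, your proof is correct; and as you note, quoting an arithmetic Nullstellensatz (Krick--Pardo--Sombra) bypasses the genericity discussion entirely at the cost of a heavier black box.
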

Let us note that since $\mathbb Q[x]$ is a principal ideal domain, there exist polynomials $b_1,\ldots, b_m \in \mathbb Q[x]$ with rational coefficients such that $b_1 g_1 + \ldots + b_m g_m = 1$. Multiplying these polynomials by a suitable number $A$ we can make their coefficients integer. Thus, the only nontrivial claim in the above lemma is the bound $A \leq B n^B$, which we claim to hold uniformly over all $g_1,\ldots, g_m$ and $n$ satisfying the above conditions. This uniformity will be crucial in what follows.

\begin{proof}
Essentially,  we apply the Euclidean algorithm while tracking the size of coefficients. We use induction over $\deg g_1 + \ldots +  \deg g_m$ (where we put $\deg 0:=0$).  If this number is $0$, then all polynomials $g_i$ are constant but not all of them are zero by Condition~(c). We can put $\alpha_i := 1$ if $g_i\geq 0$ and $\alpha_i:= -1$ if $g_i\leq 0$. Then,  $A = |g_1| + \ldots + |g_m|  \leq m Mn^M$, so that we can put $B:= M^2$.

Let now $\deg g_1 + \ldots + \deg g_m \geq 1$ and suppose that we proved the lemma for smaller values of this sum. Without loss of generality let $\deg g_1 \geq \max\{\deg g_2, \ldots, \deg g_m\}$. Then,  $\deg g_1 \geq 1$. By Condition~(c), some of the polynomials $g_2,\ldots, g_m$ is not identically zero. Let $g_2 \nequiv 0$. Write
$$
g_1(x) = c_px^p + \ldots + c_0,
\quad
g_2(x) = d_qx^q + \ldots + d_0,
\quad
c_i, d_j\in \Z,
\quad
p \geq q,
\quad
p \geq 1.
$$
Consider now instead of the tuple $(g_1,g_2,\ldots, g_m)$ the tuple $(d_q g_1 - c_q g_2 x^{p-q}, g_2,\ldots, g_m)$. Note that $\deg (d_q g_1 - c_q g_2 x^{p-q})< \deg g_1$. Also, the coefficients of the polynomials from the new tuple are integer and bounded above by $2 M^2 n^{2M}$, so , so that we can apply the induction assumption to the new tuple with $M$ replaced by $2M^2$. It follows that
$$
\tilde a_1(x) \cdot (d_q g_1(x) - c_q g_2(x) x^{p-q})  +  \tilde a_2(x) g_2(x) + \ldots + \tilde a_m(x) g_m(x) = A
$$
for suitable $\tilde a_1,\ldots, \tilde a_m \in \Z[x]$ and a number $A\in \N$, $A\leq B n^{B}$. After regrouping the terms this gives the claim.
\end{proof}

\begin{lemma}\label{lem:2}
For every number $M\in \mathbb N$ there is $C=C(M)$ depending only on $M$ such that the following holds for all $n\in \N$. Let $g_1,\ldots,g_m \in \mathbb Z[x]$ be polynomials satisfying Conditions (a), (b), (c) of Lemma~\ref{lem:1} and such that, additionally,
\begin{enumerate}
\item[(d)] There is no prime number $p\geq n$ dividing all coefficients of $g_1,\ldots, g_m$.
\end{enumerate}
Then, for the random variable $U_n$ uniformly distributed on $\{1,\ldots, n\}$ we have
$$
\mathbb{P}\{ \exists p \in \mathcal P : p \geq n,  g_1(U_n) \equiv \ldots \equiv  g_m(U_n) \equiv 0 (\mod p)\} \leq C/n.
$$
\end{lemma}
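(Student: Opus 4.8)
The plan is to combine the effective Bézout-type identity of Lemma~\ref{lem:1} with a union bound over a \emph{deterministic} finite set of primes whose size is bounded in terms of $M$ alone. First I would apply Lemma~\ref{lem:1} to the tuple $g_1,\ldots,g_m$ (which satisfies (a)--(c) by hypothesis) to obtain $a_1,\ldots,a_m\in\Z[x]$ and a number $A\in\N$ with $1\leq A\leq Bn^B$, $B=B(M)$, such that $a_1g_1+\cdots+a_mg_m=A$ identically in $x$. Evaluating this identity at $x=U_n$ shows that if a prime $p$ divides $g_i(U_n)$ for every $i=1,\ldots,m$, then $p\mid A$. Hence the event whose probability we must bound is contained in $\bigcup_{p\in\mathcal{P}_A}\{p\mid g_i(U_n)\ \forall i\}$, where $\mathcal{P}_A:=\{p\in\mathcal{P}:p\geq n,\ p\mid A\}$. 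Since distinct primes in $\mathcal{P}_A$ all divide $A\geq 1$ and are all $\geq n$, we get $n^{|\mathcal{P}_A|}\leq A\leq Bn^B$, so that $|\mathcal{P}_A|\leq B+\log B$ for $n\geq 2$; in particular $\mathcal{P}_A$ is finite of cardinality depending only on $M$.

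Next I would bound, for each fixed $p\in\mathcal{P}_A$, the probability $\mathbb{P}\{p\mid g_i(U_n)\ \forall i\}$. Here hypothesis (d) enters: since $p\geq n$ does not divide all coefficients of all the $g_i$, there is an index $i=i(p)$ for which the reduction $\bar g_{i(p)}\in\mathbb{F}_p[x]$ of $g_{i(p)}$ modulo $p$ is not the zero polynomial; it has degree at most $\deg g_{i(p)}\leq M$, hence at most $M$ roots in $\mathbb{F}_p$. Consequently
$$\mathbb{P}\{p\mid g_i(U_n)\ \forall i\}\ \leq\ \mathbb{P}\{p\mid g_{i(p)}(U_n)\}\ =\ \mathbb{P}\{U_n\bmod p\ \text{is a root of}\ \bar g_{i(p)}\}.$$
Because $p\geq n$, the residues of $1,2,\ldots,n$ modulo $p$ are pairwise distinct, so $U_n\bmod p$ lands in any prescribed set of $r$ residues with probability at most $r/n$; with $r\leq M$ this gives $\mathbb{P}\{p\mid g_i(U_n)\ \forall i\}\leq M/n$.

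Finally I would assemble the pieces by a union bound over $\mathcal{P}_A$:
$$\mathbb{P}\{\exists p\in\mathcal{P}:p\geq n,\ g_1(U_n)\equiv\cdots\equiv g_m(U_n)\equiv 0\,(\mod p)\}\ \leq\ |\mathcal{P}_A|\cdot\frac{M}{n}\ \leq\ (B+\log B)\frac{M}{n}$$
for $n\geq 2$, while for $n=1$ the left-hand side is trivially at most $1$. Setting $C:=\max\{1,(B+\log B)M\}$, which depends only on $M$, completes the argument.

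I expect the only genuinely delicate points to be (i) recording that $A\geq 1$, so that $\mathcal{P}_A$ is genuinely finite and its cardinality is controlled, and (ii) invoking hypothesis (d) at exactly the right place, namely to pass from ``every $g_i$ is divisible by $p$'' to ``some $g_{i(p)}$ has a nonzero, hence root-sparse, reduction modulo $p$.'' The conceptual content is entirely carried by Lemma~\ref{lem:1}: it replaces the random, essentially uncontrollable set of primes dividing $g_1(U_n)$ by the deterministic set $\mathcal{P}_A$ of size $O_M(1)$, after which only an elementary root count over $\mathbb{F}_p$ remains.
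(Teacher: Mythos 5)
Your proof is correct and follows essentially the same route as the paper: invoke Lemma~\ref{lem:1} to get $a_1g_1+\cdots+a_mg_m=A$ with $A\leq Bn^B$, observe that any common prime divisor $p\geq n$ of the $g_i(U_n)$ must divide $A$ and hence lies in a set of size $O_M(1)$, and bound each summand in the union bound by $M/n$ via a root count over $\mathbb{F}_p$ using condition (d) and the injectivity of $\{1,\ldots,n\}\to\mathbb{F}_p$. The only cosmetic difference is your estimate $|\mathcal{P}_A|\leq B+\log B$ versus the paper's $\leq B+1$ (obtained by assuming $n\geq B$ and disposing of small $n$ trivially), which affects only the value of the constant $C(M)$.
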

\begin{proof}
By Lemma~\ref{lem:1} we have $a_1 g_1 + \ldots + a_m g_m = A$ for some $A\in \mathbb N$  with $A \leq B n^B$ and some polynomials $a_1,\ldots, a_m \in \mathbb Z[x]$ with integer coefficients.
So, every common prime  divisor $p\geq n$ of $g_1(U_n), \ldots, g_m(U_n)$ must be a divisor of $A$. The number $A$ has at most $B+1$ distinct prime divisors $p_1,\ldots, p_\ell\geq n$, where we assumed that $n\geq B$. (For $n\leq B$ the claim is trivial since there are only finitely many choices for $g_1,\ldots, g_m$.)   So,
\begin{multline*}
\mathbb{P}\{\exists p \in \mathcal P : p \geq n,  g_1(U_n) \equiv \ldots \equiv  g_m(U_n) \equiv 0(\mod p)\}\\
\leq
\sum_{i=1}^\ell \mathbb{P} \{g_1(U_n) \equiv \ldots \equiv  g_m(U_n) \equiv 0(\mod p_i)\}.
\end{multline*}
Fix some $i\in \{1,\ldots, \ell\}$. Some of the coefficients of some polynomial $g_j$ is not divisible by $p_i$, by Condition~(d). So, the reduction of $g_j$ modulo $p_i$ is a non-zero polynomial. Thus, it has at most $\deg g_j \leq M$ zeros over $\mathbb F_{p_i}$. Since $p_i \geq n$ and hence all numbers $1,\ldots, n$ have different remainders modulo $p_i$, there are at most $M$ possible values of $U_n$ for which $g_j(U_n)$ is divisible by $p_i$. It follows that
$$
\mathbb P (g_1(U_n) \equiv \ldots \equiv  g_m(U_n) \equiv 0 \;\; \mod p_i) \leq M/n.
$$
The claim follows with $C:= (B+1)M$ since $\ell \leq B+1$.
\end{proof}

It is well known that the property of $2$ univariate polynomials to have a non-constant common divisor can be expressed as a polynomial condition on their coefficients. Given next is a generalization to any finite number of polynomials which is also a standard result in algebra, see~\cite{Resultant1,Resultant2}.
\begin{lemma}[Resultant]\label{lem:resultant}
Let $R$ be an integral domain, $m\in \N_0$. Fix ``degrees'' $d_1,\ldots, d_m\in \N_0$. There exist $L\in \N$ and polynomials $W_1,\ldots, W_L$ in $d_1 + \ldots + d_m + m$ variables (having integer coefficients) with the property that polynomials  $Q_1,\ldots, Q_m \in R[x]$ with $\deg Q_1 = d_1, \ldots, \deg Q_m = d_m$ have a nonconstant common divisor in $R[x]$ if and only if all polynomials $W_1,\ldots, W_L$, evaluated at the coefficients of $Q_1,\ldots, Q_m$, vanish.
\end{lemma}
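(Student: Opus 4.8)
The plan is to reduce the assertion for $m$ polynomials to the classical resultant of two polynomials, and then to read off the $W_j$'s as the coefficients, with respect to auxiliary indeterminates, of a single such resultant. Since both divisibility of polynomials and vanishing of a polynomial expression in the coefficients are unaffected by enlarging the domain, we may work over the fraction field $K:=\mathrm{Frac}(R)$ and its algebraic closure $\overline{K}$; because $\deg Q_i=d_i$, each $Q_i$ has an invertible leading coefficient, so ``$Q_1,\dots,Q_m$ have a nonconstant common divisor'' coincides with ``$Q_1,\dots,Q_m$ have a common root in $\overline{K}$'' (and when $R$ is a UFD it also matches the literal statement over $R[x]$, by Gauss's lemma). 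We dispose at once of the trivial cases in which $m\le 1$ or some $d_i=0$: then, under the degree hypotheses, the asserted property is either always or never satisfied, and one takes $W_1:=0$ or $W_1:=1$ accordingly. From now on $m\ge 2$ and $d_1,\dots,d_m\ge 1$.

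Next I recall the two-polynomial case. For $P,Q\in R[x]$ of exact degrees $d,e\ge 1$, the Sylvester resultant $\mathrm{Res}_{d,e}(P,Q)$ is the determinant of the $(d+e)\times(d+e)$ Sylvester matrix built from the coefficients of $P$ and $Q$; it is a polynomial with integer coefficients in the $d+e+2$ coefficients of $P$ and $Q$. The fact I use is that $\mathrm{Res}_{d,e}(P,Q)=0$ if and only if $P$ and $Q$ have a common factor of positive degree over $K$; this follows from linear algebra together with the fact that $K[x]$ is a principal ideal domain, since $\mathrm{Res}_{d,e}(P,Q)=0$ is equivalent to the solvability of $AP+BQ=0$ by a nonzero pair $(A,B)\in K[x]^2$ with $\deg A<e$ and $\deg B<d$, and such a relation forces $P$ and $Q$ to share a nonconstant factor (the converse being immediate).

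For $m\ge 2$ I introduce new indeterminates $u_2,\dots,u_m$ and the generic linear combination
$$
\widetilde{Q}:=u_2Q_2+\dots+u_mQ_m\in R[u_2,\dots,u_m][x].
$$
Its degree in $x$ equals $d:=\max_{2\le i\le m}d_i$, because the $x^d$-coefficient of $\widetilde{Q}$, namely $\sum_{i:\,d_i=d}u_i\cdot(\text{leading coefficient of }Q_i)$, is a nonzero element of $K(u_2,\dots,u_m)$. The key point, and the step I expect to be the main obstacle, is the equivalence: $Q_1$ and $\widetilde{Q}$ have a common factor of positive degree in $K(u_2,\dots,u_m)[x]$ if and only if $Q_1,\dots,Q_m$ have a common factor of positive degree in $K[x]$. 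One direction is clear, since any common divisor of $Q_1,\dots,Q_m$ divides both $Q_1$ and $\widetilde{Q}$. For the other, one uses that $K[u_2,\dots,u_m]$ is a UFD, so that by Gauss's lemma the irreducible factors of $Q_1\in K[x]$ stay irreducible over $K(u_2,\dots,u_m)$; hence some irreducible factor $h\in K[x]$ of $Q_1$ with $\deg h\ge 1$ divides $\widetilde{Q}$ over $K(u_2,\dots,u_m)[x]$, and then also over $K[u_2,\dots,u_m][x]$ (as $h\in K[x]$ is primitive over $K[u_2,\dots,u_m]$). Writing $\widetilde{Q}=h\cdot g$ with $g\in K[u_2,\dots,u_m][x]$ and comparing the coefficients of $u_2,\dots,u_m$ on the two sides, while noting that $g$, like $\widetilde{Q}$, is homogeneous of degree $1$ in $u_2,\dots,u_m$, yields $h\mid Q_i$ in $K[x]$ for every $i$. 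Carrying out this change of base field, the ensuing specialization, and the verification that $\widetilde{Q}$ really has $x$-degree $d$ all at once is the technical core of the argument.

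It remains to assemble the pieces. Applying the two-polynomial criterion over the domain $R[u_2,\dots,u_m]$, whose fraction field is $K(u_2,\dots,u_m)$, the polynomials $Q_1,\dots,Q_m$ have a nonconstant common divisor if and only if $\mathrm{Res}_{d_1,d}(Q_1,\widetilde{Q})=0$ in $K(u_2,\dots,u_m)$, i.e.\ if and only if this resultant is the zero polynomial in $u_2,\dots,u_m$. Now $\mathrm{Res}_{d_1,d}(Q_1,\widetilde{Q})$ is a polynomial with integer coefficients in the coefficients of $Q_1$ and of $\widetilde{Q}$, and the coefficients of $\widetilde{Q}$ are in turn $\mathbb{Z}$-linear in $u_2,\dots,u_m$ and in the coefficients of $Q_2,\dots,Q_m$; hence we may expand
$$
\mathrm{Res}_{d_1,d}(Q_1,\widetilde{Q})=\sum_{\beta}W_\beta\bigl(\text{coefficients of }Q_1,\dots,Q_m\bigr)\,u^{\beta},
$$
where each $W_\beta$ is a polynomial with integer coefficients in the $d_1+\dots+d_m+m$ coefficient variables, and the resultant vanishes identically in $u_2,\dots,u_m$ precisely when all the $W_\beta$ vanish. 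Taking $\{W_1,\dots,W_L\}$ to be this finite collection proves the lemma; the two-polynomial resultant and the bookkeeping here are entirely standard.
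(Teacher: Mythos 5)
Your proposal is correct and follows essentially the same approach as the paper: form the generic combination $u_2Q_2+\cdots+u_mQ_m$, take its Sylvester resultant with $Q_1$ over the ring with the $u_i$ adjoined, and extract the $W_j$ as the coefficients of the $u$-monomials of that resultant. You fill in the details the paper only sketches, in particular the verification of the key equivalence via Gauss's lemma, the exact $x$-degree of $\widetilde{Q}$, and the distinction between common factors over $R$ and over its fraction field.
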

\begin{proof}
For $m=2$ polynomials, we can take $L=1$ and $W_1$ to be the Sylvester resultant of $Q_1$ and $Q_2$.  For $m\geq 3$, we introduce new variables $u_2,\ldots, u_m$ and observe that $Q_1, \ldots, Q_m$ have a common factor in $R[x]$ if and only if $Q_1$ and $u_2 Q_2 + \ldots + u_m Q_m$ have a common factor in $R[x, u_1,\ldots, u_m] \equiv R'[x]$, where $R'= R[u_1,\ldots, u_m]$ is also an integral domain. The Sylvester resultant of $Q_1$ and $u_2 Q_2 + \ldots + u_m Q_m$, considered as elements of $R'[x]$, is a polynomial in the coefficients of $Q_1,\ldots, Q_m$ and the variables  $u_2,\ldots, u_m$. The resultant can be written as a sum of finitely many  monomials of the form $u_2^{\ell_2} \ldots u_m^{\ell_m}$ multiplied by certain polynomials in the coefficients of $Q_1,\ldots, Q_m$. Denote these polynomials (in some order)  by $W_1,\ldots, W_L$. Then, $W_1 = \ldots = W_L = 0$ if and only if the resultant of $Q_1$ and $u_2 Q_2 + \ldots + u_m Q_m$ vanishes, which is the case if and only if the polynomials $Q_1,\ldots, Q_m$ have a common factor.
\end{proof}
\begin{rem}\label{rem:resultant_one_direction}
If $\deg Q_1\leq d_1,\ldots, \deg Q_m \leq d_m$, then the ``only if'' direction of the above claim holds with the same proof: if $Q_1,\ldots, Q_m$ have a common factor, then $W_1,\ldots, W_L$, evaluated at the coefficients of $Q_1,\ldots, Q_m$, vanish.
\end{rem}

\begin{proof}[Proof of Proposition~\ref{prop:large_prime_common_divisor}]
We use induction over the number of variables $s$.  For $s=1$, the claim follows immediately from Lemma~\ref{lem:2}.

Take some $s\in \{2,3,\ldots\}$ and assume we proved the proposition for polynomials of $s-1$ variables. We prove it for polynomials with $s$ variables. The idea is to fix the numbers $x_1,\ldots, x_{s-1} \in \{1,\ldots, n\}$ and consider the polynomials $g_i (x_s) := f_i(x_1,\ldots, x_{s-1}, x_s)$ as  univariate polynomials in $x_s$. Clearly, $g_i \in \mathbb Z[x_s]$. For a sufficiently large $M\in \N$, Conditions~(a) and~(b) of Lemma~\ref{lem:1} are fulfilled. Let $C_n$, respectively  $D_n$, be the sets of all $(x_1,\ldots, x_{s-1}) \in \{1,\ldots, n\}^{s-1}$ for which $g_1,\ldots, g_m$ fail to satisfy  Condition~(c), respectively,  (d). Let $G_n$ be the complement of $C_n\cup D_n$, that is the set of all $(x_1,\ldots, x_{s-1}) \in \{1,\ldots, n\}^{s-1}$ for which both Conditions~(c) and~(d) are fulfilled.  Write $\Pi (x_1,\ldots, x_s) = (x_1,\ldots, x_{s-1})$ for the projection map removing the last coordinate.  Then,
\begin{align*}
\lefteqn{\mathbb{P}\{ \exists p \in \mathcal P : p \geq n,  f_1(\mathbf{U}_n^{(s)}) \equiv \ldots \equiv  f_m(\mathbf{U}_n^{(s)}) \equiv 0 \;\; \mod p, \Pi \mathbf{U}_n^{(s)} \in G_n)\}}\\
&=
\frac{1}{n^{s-1}} \sum_{(x_1,\ldots, x_{s-1}) \in G_n} \mathbb{P}\{\exists p \in \mathcal P : p \geq n,  f_1(x_1,\ldots, x_{s-1}, U_n) \equiv \ldots \\
&\hspace{8cm}\equiv  f_m(x_1,\ldots, x_{s-1}, U_n) \equiv 0(\mod p)\}\\
&\leq
\frac 1 {n^{s-1}} \sum_{(x_1,\ldots, x_{s-1}) \in G_n}
\frac{C}{n}
\leq
\frac{C}{n},
\end{align*}
where we applied Lemma~\ref{lem:2} to the polynomials $g_i (x_s) = f_i(x_1,\ldots, x_{s-1}, x_s)$. Note that the constant $C$ in Lemma~\ref{lem:2} does not depend on the choice of $x_1,\ldots, x_{s-1} \in \{1,\ldots, n\}$.

Let us check that $\mathbb{P}\{\Pi \mathbf{U}_n^{(s)} \in D_n\} \to 0$ as $n\to\infty$. Recall that $\Pi \mathbf{U}_n^{(s)} \in D_n$ means that all coefficients of the univariate polynomials $g_1(\Pi \mathbf{U}_n^{(s)}, x_s),\ldots, g_m(\Pi \mathbf{U}_n^{(s)}, x_s)$ have a common prime divisor $p\geq n$.  Consider the ring $R = \mathbb Z [x_1,\ldots, x_{s-1}]$. Then, we can view $h_i (x_s) := f_i(x_1,\ldots, x_{s-1}, x_s) \in R[x_s]$ as a polynomial in $x_s$ with coefficients in $R$. Let $q_1,\ldots, q_L\in R$ be the coefficients of the  polynomials $h_1(x_s), \ldots, h_m(x_s)$ listed in some order.  Then, $q_1,\ldots, q_L$ have no nonconstant common divisor in $R$ since otherwise $f_1,\ldots, f_m$ would have a nonconstant common divisor. We can then apply the induction assumption to $q_1,\ldots, q_L$ (which depend on $s-1$ variables and for which we assume Proposition~\ref{prop:large_prime_common_divisor} to hold). This yields
$$
\lim_{n\to\infty} \mathbb{P}\{\exists p \in \mathcal P : p \geq n,  q_1(\Pi \mathbf{U}_n^{(s)}) \equiv \ldots \equiv  q_L(\Pi \mathbf{U}_n^{(s)}) \equiv 0(\mod p)\} = 0.
$$
This proves that $\mathbb{P}\{\Pi \mathbf{U}_n^{(s)} \in D_n\} \to 0$ as $n\to\infty$.

Let us check that $\mathbb{P}\{\Pi \mathbf{U}_n^{(s)} \in C_n\} \to 0$, $n\to\infty$. We again consider $h_i (x_s)\in R[x_s]$ as polynomials in $x_s$ with coefficients in the integral domain $R = \mathbb Z [x_1,\ldots, x_{s-1}]$. By the hypothesis of Proposition~\ref{prop:large_prime_common_divisor}, these polynomials do not have a common factor in $R[x_s] = \Z [x_1,\ldots, x_s]$ of degree $>0$. By Lemma~\ref{lem:resultant} this implies that certain polynomial, say $W_1$, of their coefficients (which are elements in $R$) does not vanish in  $R$. Inserting in $W_1$ the coefficients  (which are polynomials in $x_1,\ldots, x_{s-1}$), we obtain certain \textit{non-zero} polynomial $W_2 \in \Z[x_1,\ldots, x_{s-1}]$. Now, $\Pi \mathbf U_n \in C_n$ means that the polynomials $g_1(\Pi \mathbf U_n, x_s),\ldots, g_m(\Pi \mathbf U_n, x_s)$, viewed as elements in $\Z[x_s]$, have a common non-constant factor, which, by Lemma~\ref{lem:resultant} and Remark~\ref{rem:resultant_one_direction}, implies that $W_2 (\Pi \mathbf U_n) = 0$. Since $W_2\nequiv 0$, we can apply Lemma~\ref{lem:schwartz_zippel} from the Appendix, which yields
$$
\mathbb{P}\{\Pi \mathbf{U}_n^{(s)} \in C_n\} \leq \mathbb{P}\{W_2 (\Pi \mathbf{U}_n^{(s)} = 0\} \leq \frac{\deg W_2}{n},
$$
which converges to $0$ as $n\to\infty$.
\end{proof}

\section*{Appendix}

\begin{lemma}\label{lem:gcd}
For $a_1,\ldots,a_n,b_1,\ldots,b_m\in\mathbb{N}$ we have
$$
\GCD(a_1\cdots a_n,b_1\cdots b_m)\leq \prod_{i=1}^{n}\prod_{j=1}^{m}\GCD(a_i,b_j).
$$
\end{lemma}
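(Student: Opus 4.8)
The plan is to reduce the multiplicative statement to a prime-by-prime comparison of $p$-adic valuations, which is the standard trick for any $\GCD$/$\LCM$ identity. Fix a prime $p\in\mathcal P$ and recall that for a positive integer $c$ one has $\lambda_p(c)=\lambda_p(|c|)$ equal to the exponent of $p$ in the factorization of $c$, and that for two positive integers $u,v$,
$$
\lambda_p(\GCD(u,v))=\min\bigl(\lambda_p(u),\lambda_p(v)\bigr),
\qquad
\lambda_p\Bigl(\prod_k c_k\Bigr)=\sum_k \lambda_p(c_k).
$$
Since $\GCD(u,v)\leq w$ for two positive integers exactly when $\lambda_p(\GCD(u,v))\leq\lambda_p(w)$ fails to give the full ordering — inequality of integers is \emph{not} equivalent to inequality of all valuations — I instead prove the stronger divisibility statement $\GCD(a_1\cdots a_n,b_1\cdots b_m)\mid\prod_{i,j}\GCD(a_i,b_j)$, from which the displayed inequality follows immediately because every positive divisor is at most the number it divides.

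So the key step is: for every prime $p$,
$$
\lambda_p\bigl(\GCD(a_1\cdots a_n,\,b_1\cdots b_m)\bigr)\ \leq\ \sum_{i=1}^{n}\sum_{j=1}^{m}\lambda_p\bigl(\GCD(a_i,b_j)\bigr)
=\sum_{i=1}^{n}\sum_{j=1}^{m}\min\bigl(\lambda_p(a_i),\lambda_p(b_j)\bigr).
$$
Write $\alpha_i:=\lambda_p(a_i)\geq 0$ and $\beta_j:=\lambda_p(b_j)\geq 0$. The left-hand side equals $\min\bigl(\sum_i\alpha_i,\sum_j\beta_j\bigr)$. Hence the whole lemma comes down to the elementary inequality
$$
\min\Bigl(\sum_{i=1}^{n}\alpha_i,\ \sum_{j=1}^{m}\beta_j\Bigr)\ \leq\ \sum_{i=1}^{n}\sum_{j=1}^{m}\min(\alpha_i,\beta_j)
$$
for nonnegative integers (indeed nonnegative reals) $\alpha_i,\beta_j$. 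This I would prove by noting that the double sum on the right is at least $\sum_i\min(\alpha_i,\beta_1)$ if $\beta_1$ happens to dominate, but more robustly: pick $j_0$ with $\beta_{j_0}=\max_j\beta_j$ and $i_0$ with $\alpha_{i_0}=\max_i\alpha_i$; then $\sum_{i,j}\min(\alpha_i,\beta_j)\geq\sum_i\min(\alpha_i,\beta_{j_0})$ and separately $\geq\sum_j\min(\alpha_{i_0},\beta_j)$. If $\beta_{j_0}\geq\sum_i\alpha_i$ the first bound already gives $\sum_i\alpha_i$; if $\alpha_{i_0}\geq\sum_j\beta_j$ the second gives $\sum_j\beta_j$; and if neither holds one checks directly that, since each $\alpha_i\le\beta_{j_0}$ would not necessarily hold, a cleaner route is induction on $n$ — peeling off $a_n$ and using $\min(A+\alpha_n,B)\le\min(A,B)+\min(\alpha_n,B)\le\min(A,B)+\sum_j\min(\alpha_n,\beta_j)$ together with the induction hypothesis applied to $a_1,\dots,a_{n-1}$.

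The only mild obstacle is bookkeeping: handling the degenerate cases where some $a_i$ or $b_j$ equals $1$ (so $\lambda_p$ vanishes) or where $n$ or $m$ equals $1$, and making sure the induction base case $n=1$, which reads $\min(\alpha_1,\sum_j\beta_j)\leq\sum_j\min(\alpha_1,\beta_j)$, is verified — this itself follows from $\min(\alpha_1,\beta_1+\dots+\beta_m)\le\min(\alpha_1,\beta_1)+\min(\alpha_1,\beta_2+\dots+\beta_m)$ by a further trivial induction on $m$. Once the valuation inequality holds for every prime $p$, summing the exponents back up yields the divisibility $\GCD(a_1\cdots a_n,b_1\cdots b_m)\mid\prod_{i,j}\GCD(a_i,b_j)$ and hence the stated inequality between positive integers, completing the proof.
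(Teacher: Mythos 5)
Your proposal is correct and takes essentially the same route as the paper: both reduce to the prime-by-prime inequality $\min\bigl(\sum_i\alpha_i,\sum_j\beta_j\bigr)\le\sum_{i,j}\min(\alpha_i,\beta_j)$ on $p$-adic valuations and then reassemble across primes (the paper phrases this as a ``crude bound'' stated without proof, where you supply a double induction and the extra, correct, observation that the valuation inequality actually yields divisibility, not just a numerical bound). The ``max'' detour in your middle paragraph is a false start you rightly abandon; the induction on $n$ with base case settled by induction on $m$ is the working argument and matches the paper's unproved lemma.
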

\begin{proof}
Using a crude bound
$$
\min(x_1+\cdots+x_n,y_1+\cdots+y_m)\leq \sum_{i=1}^{n}\sum_{j=1}^{m}\min (x_i,y_j),\quad x_i,y_j\geq 0,
$$
we obtain
\begin{multline*}
\GCD(a_1\cdots a_n,b_1\cdots b_m)=\prod_{p\in\mathcal{P}}p^{\min(\sum_{i=1}^{n}\lambda_p(a_i),\sum_{j=1}^{m}\lambda_p(b_j))}\leq \prod_{p\in\mathcal{P}}p^{\sum_{i=1}^{n}\sum_{j=1}^{m}\min(\lambda_p(a_i),\lambda_p(b_j))}\\
\leq \prod_{i=1}^{n}\prod_{j=1}^{m}\prod_{p\in\mathcal{P}}p^{\min(\lambda_p(a_i),\lambda_p(b_j))}=\prod_{i=1}^{n}\prod_{j=1}^{m}\GCD(a_i,b_j).
\end{multline*}
\end{proof}

\begin{lemma}\label{lem:haar_of_variety_is_zero}
Fix $p\in\mathcal{P}$. Let $f\in \mathbb{Q}_p[x_1,\ldots,x_s]$ be a non-zero polynomial over $p$-adic rationals and $\mu_p$ be the Haar measure on $\Z_p$. Then
$$
\mu_p^{\otimes s}
(\{x = (x_1,\ldots, x_s)\in \mathbb{Z}_p^s:f(x)=0\})=0.
$$
\end{lemma}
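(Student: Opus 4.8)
The plan is to proceed by induction on the number of variables $s$, reducing the statement to the one-variable case via Fubini's theorem and the fact that $(\Z_p,\mu_p)$ is non-atomic: every ball $\{y\in\Z_p:\|y-x\|_p\leq p^{-k}\}$ has $\mu_p$-measure $p^{-k}$, so every singleton in $\Z_p$ is $\mu_p$-null. First I would record that the zero set $Z_f:=\{x\in\Z_p^s:f(x)=0\}$ is closed, since $f$ is continuous on $\Z_p^s$, hence Borel; as $\Z_p$ is second countable, the Borel $\sigma$-algebra of $\Z_p^s$ coincides with the $s$-fold product of the Borel $\sigma$-algebra of $\Z_p$, so all the measures and integrals below are well defined.

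For $s=1$, a non-zero $f\in\Q_p[x]$ has at most $\deg f$ roots in the field $\Q_p$, hence at most $\deg f$ roots in $\Z_p$, and a finite subset of $\Z_p$ is $\mu_p$-null by the preceding remark. For the inductive step, assuming $s\geq 2$ and the claim in $s-1$ variables, I would write $f$ as a polynomial in $x_s$ with coefficients in $\Q_p[x_1,\ldots,x_{s-1}]$,
$$
f(x_1,\ldots,x_s)=\sum_{k=0}^{d}c_k(x_1,\ldots,x_{s-1})\,x_s^k,
$$
where not all $c_k$ are identically zero. Fixing an index $k_0$ with $c_{k_0}\not\equiv 0$ and setting $E:=\{(x_1,\ldots,x_{s-1})\in\Z_p^{s-1}:c_{k_0}(x_1,\ldots,x_{s-1})=0\}$, the induction hypothesis applied to the non-zero polynomial $c_{k_0}\in\Q_p[x_1,\ldots,x_{s-1}]$ yields $\mu_p^{\otimes(s-1)}(E)=0$. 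Whenever $(x_1,\ldots,x_{s-1})\notin E$, the univariate polynomial $f(x_1,\ldots,x_{s-1},\cdot)\in\Q_p[x_s]$ is non-zero, because its coefficient at $x_s^{k_0}$ does not vanish, so by the case $s=1$ its zero set in $\Z_p$ is $\mu_p$-null.

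It then remains to apply Fubini's theorem to the indicator function of $Z_f$:
$$
\mu_p^{\otimes s}(Z_f)=\int_{\Z_p^{s-1}}\mu_p\big(\{x_s\in\Z_p:f(x_1,\ldots,x_{s-1},x_s)=0\}\big)\,\dint\mu_p^{\otimes(s-1)}(x_1,\ldots,x_{s-1}),
$$
and the integrand vanishes off the $\mu_p^{\otimes(s-1)}$-null set $E$, so the integral equals $0$, closing the induction. I expect the argument to be essentially routine; the only points deserving attention are the measurability of $Z_f$ (settled above) and correctly identifying the lower-dimensional polynomial — a single coefficient $c_{k_0}$ — to which the induction hypothesis should be applied, so no genuine obstacle arises.
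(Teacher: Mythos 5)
Your proposal is correct and follows essentially the same route as the paper: induction on $s$ via Fubini's theorem, with the base case handled by the finiteness of the zero set of a nonzero univariate polynomial. The only cosmetic difference is that the paper isolates the first variable and uses the leading coefficient $a_d$ (choosing a variable that actually appears), whereas you isolate the last variable and pick any non-identically-zero coefficient $c_{k_0}$; your added remarks on measurability of $Z_f$ are correct but routine.
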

\begin{proof}
We use induction over $s$. For $s=1$, the polynomial $f$ has only finitely many zeros in $\Z_p$ since $f\nequiv 0$, hence the claim is true. Suppose the claim is true for polynomials of $s-1$ variables. Consider some non-zero polynomial $f\in \mathbb{Q}_p[x_1,\ldots,x_s]$. One of the variables (without loss of generality, $x_1$) appears in $f$ in degree $\geq 1$. Write $f(x_1,\ldots, x_s)= \sum_{j=0}^d x_1^j a_j(x_2,\ldots, x_s)$, where $d\geq 1$, $a_j \in \Q_p[x_2,\ldots, x_s]$ and $a_d \nequiv 0$. By induction hypothesis, the set $E\subset \Z_p^{s-1}$ consisting of the zeros of the polynomial $a_d(x_2,\ldots, x_s)$ is a $\mu_p^{\otimes (s-1)}$-zero set. Hence,
$$
\mu_p^{\otimes s}
(\{x\in \mathbb{Z}_p^s:f(x)=0, (x_2,\ldots, x_s) \in E\})=0.
$$
On the other hand, for every fixed  $(x_2,\ldots, x_s)\in \Z_p^{s-1} \backslash E$, the polynomial $x_1\mapsto f(x_1,\ldots, x_s)$ is non-zero and  has at most $d$ roots. By Fubini's theorem,
$$
\mu_p^{\otimes s}
(\{x\in \mathbb{Z}_p^s:f(x)=0, (x_2,\ldots, x_s) \notin E\})=0,
$$
and the proof is complete.
\end{proof}

\begin{prop}[The Schwartz-Zippel bound]\label{lem:schwartz_zippel}
Let $Q\in \mathbb Z[x_1,\ldots, x_s]$ be a non-zero polynomial and let $\mathbf{U}_n^{(s)} = (U_{n,1},\ldots,U_{n,s})$ be uniformly distributed on $\{1,\ldots, n\}^s$. Then,
$$
\mathbb{P}\{Q(\mathbf{U}_n^{(s)}) = 0\} \leq \frac{\deg Q}{n}.
$$
\end{prop}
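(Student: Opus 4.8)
The plan is to prove the Schwartz--Zippel bound by induction on the number of variables $s$, exactly mirroring the structure already used in Lemma~\ref{lem:haar_of_variety_is_zero}. The base case $s=1$ is immediate: a non-zero univariate polynomial $Q\in\Z[x]$ of degree $d:=\deg Q$ has at most $d$ roots in $\Z$, so among the $n$ equally likely values $1,\ldots,n$ at most $d$ of them are roots, giving $\mathbb{P}\{Q(U_{n,1})=0\}\leq d/n$.

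For the inductive step, suppose the bound holds for polynomials in $s-1$ variables. Given a non-zero $Q\in\Z[x_1,\ldots,x_s]$, single out a variable, say $x_s$, that appears in $Q$ with positive degree, and write $Q(x_1,\ldots,x_s)=\sum_{j=0}^{d'} x_s^j\, a_j(x_1,\ldots,x_{s-1})$ with $d'\geq 1$ and leading coefficient $a_{d'}\nequiv 0$. Note $\deg a_{d'}\leq \deg Q - d'$. Condition on the value of $\mathbf{U}_{n}^{(s-1)}:=(U_{n,1},\ldots,U_{n,s-1})$. If $a_{d'}(\mathbf{U}_n^{(s-1)})\neq 0$, then $x_s\mapsto Q(\mathbf{U}_n^{(s-1)},x_s)$ is a non-zero univariate polynomial of degree $d'$, hence $Q(\mathbf{U}_n^{(s)})=0$ with conditional probability at most $d'/n$. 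If $a_{d'}(\mathbf{U}_n^{(s-1)})=0$, we simply bound the conditional probability by $1$. Combining via the law of total probability and the independence of $U_{n,s}$ from $\mathbf{U}_n^{(s-1)}$,
\begin{equation*}
\mathbb{P}\{Q(\mathbf{U}_n^{(s)})=0\}
\leq
\frac{d'}{n}+\mathbb{P}\{a_{d'}(\mathbf{U}_n^{(s-1)})=0\}
\leq
\frac{d'}{n}+\frac{\deg a_{d'}}{n}
\leq
\frac{\deg Q}{n},
\end{equation*}
where the middle inequality uses the induction hypothesis applied to the non-zero polynomial $a_{d'}$ in $s-1$ variables, and the last inequality uses $\deg a_{d'}\leq \deg Q - d'$.

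There is no real obstacle here; the only point requiring a little care is the degree bookkeeping $d'+\deg a_{d'}\leq \deg Q$, which holds because every monomial of $a_{d'}$ multiplied by $x_s^{d'}$ is a monomial of $Q$. The argument is self-contained and uses only the elementary fact that a non-zero univariate polynomial of degree $d$ has at most $d$ roots, together with independence of the coordinates of $\mathbf{U}_n^{(s)}$.
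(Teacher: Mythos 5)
Your proof is correct, and it is the standard inductive argument for the Schwartz--Zippel (DeMillo--Lipton--Schwartz--Zippel) bound. Note, however, that the paper states Proposition~\ref{lem:schwartz_zippel} without proof, treating it as a well-known fact, so there is no paper proof to compare against; what you have written is the textbook proof, and, as you observe, it deliberately parallels the induction carried out in Lemma~\ref{lem:haar_of_variety_is_zero}. The key points are all in order: the base case counts roots of a univariate polynomial over $\Z$; the inductive step conditions on $\mathbf{U}_n^{(s-1)}$, splits on whether the leading coefficient $a_{d'}$ vanishes, uses independence of $U_{n,s}$ from the other coordinates and the fact that a degree-$d'$ univariate polynomial has at most $d'$ roots, and invokes the induction hypothesis for the non-zero polynomial $a_{d'}$ in $s-1$ variables; and the degree bookkeeping $d'+\deg a_{d'}\leq \deg Q$ is correct because each monomial of $a_{d'}x_s^{d'}$ is a monomial of $Q$. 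Two small points that would make the write-up airtight: first, handle separately the degenerate case where $Q$ is a non-zero constant (then no variable appears with positive degree, but $\deg Q = 0$ and $\mathbb{P}\{Q(\mathbf{U}_n^{(s)})=0\}=0$, so the bound holds trivially); second, the phrase ``single out a variable, say $x_s$'' implicitly relabels coordinates, which is justified by the exchangeability of the coordinates of $\mathbf{U}_n^{(s)}$ --- a one-line remark, but worth stating since the induction hypothesis is phrased in terms of the first $s-1$ coordinates. Neither of these affects the substance.
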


\section*{Acknowledgments}

This work has been accomplished during AM's visit to Queen Mary University of London as Leverhulme Visiting Professor in July-December 2023. AM gratefully acknowledges financial support from the Leverhulme Trust. ZK has been supported by the German Research Foundation under Germany's Excellence Strategy  EXC 2044 -- 390685587, Mathematics M\"unster: Dynamics - Geometry - Structure.

\end{document}